\theoremstyle{plain}    
\newtheorem{thm}{Theorem}[section]
\newtheorem{lemma}[thm]{Lemma} 
\newtheorem{prop}[thm]{Proposition}
\theoremstyle{remark}
\newtheorem{remark}[thm]{Remark}
\theoremstyle{definition}
\newtheorem{ques}[thm]{Question}
\newcommand\Cpx{{\mathbf C}}
\newcommand\cycsim{\overset{\mathrm{cyc}}{\sim}}
\newcommand\HEu{{\EuScript H}}                   
\newcommand\Hh{{\widehat H}}
\newcommand\Nats{{\mathbf N}}
\newcommand\RealPart{{\mathrm{Re}\;}}
\newcommand\Reals{{\mathbf R}}
\newcommand\Tr{{\mathrm{Tr}}}
\begin{document}

\title[BMV conjecture]{Sum--of--squares results for polynomials related to the Bessis--Moussa--Villani conjecture}

\author[Collins]{Beno\^\i{}t Collins$^{\dagger}$}
\address{B.\ Collins,
Department of Mathematics and Statistics, University of Ottawa,
585 King Edward,
Ottawa, ON
K1N 6N5 Canada, and
CNRS, Department of Mathematics, Lyon 1 Claude Bernard University, France} 
\email{bcollins@uottawa.ca}
\thanks{\footnotesize $^{\dagger}$Research supported in part by NSERC
grant RGPIN/341303--2007 and by the
Granma and Galoisint ANR grants}

\author[Dykema]{Kenneth J. Dykema$^{*}$}
\address{K.\ Dykema and F.\ Torres-Ayala, Department of Mathematics, Texas A\&M University,
College Station, TX 77843-3368, USA}
\email{kdykema@math.tamu.edu, francisc@math.tamu.edu}
\thanks{\footnotesize $^{*}$Research supported in part by NSF grants DMS--0600814 and 
DMS--0901220}

\author[Torres--Ayala]{Francisco Torres--Ayala}

\begin{abstract}
We show that the polynomial $S_{m,k}(A,B)$, that is the sum of all words in noncommuting variables $A$ and $B$
having length $m$ and exactly $k$ letters equal to $B$, is
not equal to a sum of commutators and Hermitian squares in the algebra $\Reals\langle X,Y\rangle$,
where $X^2=A$ and $Y^2=B$,
for all even values of $m$ and $k$ with $6\le k\le m-10$, and also for $(m,k)=(12,6)$.
This leaves only the case $(m,k)=(16,8)$ open.
This topic is of interest in connection with the Lieb--Seiringer formulation of the Bessis--Moussa--Villani
conjecture, which asks whether $\Tr(S_{m,k}(A,B))\ge0$ holds for all positive semidefinite matrices $A$ and $B$.
These results eliminate the possibility of using ``descent + sum-of-squares''
to prove the BMV conjecture.

We also show that $S_{m,4}(A,B)$ is equal to a sum of commutators and Hermitian squares in $\Reals\langle A,B\rangle$
when $m$ is even and not a multiple of $4$,
which implies $\Tr(S_{m,4}(A,B))\ge0$ holds for all Hermitian matrices $A$ and $B$, for these values of $m$.
\end{abstract}

\subjclass[2000]{15A24 (82B10)}

\keywords{BMV conjecture, Hermitian squares}

\date{March 27, 2010}

\maketitle

\section{Introduction}
While working on quantum statistical mechanics,  Bessis, Moussa and Villani~\cite{BMV} conjectured in 1975
that for any positive semidefinite Hermitian
matrices $A$ and $B$, the function $t\mapsto\Tr(e^{A-tB})$ is the Laplace transform of a positive measure supported
in $\Reals_+$.
This is referred to as the Bessis--Moussa--Villani or BMV conjecture.
In 2004, Lieb and Seiringer~\cite{LS04} proved that the BMV conjecture is equivalent to the following
reformulation:
for every $A$ and $B$ as above, all of the coefficients of the polynomial
\begin{equation}\label{eq:pt}
p(t)=\Tr((A+tB)^m)\in\Reals[t]
\end{equation}
are nonnegative.
Recently, there has been much activity around this algebraic reformulation, (see~\cite{HJ05}, \cite{H07},
\cite{LaSp}, \cite{B}, \cite{KS08}).
The latest state of knowledge is summarized in~\cite{KS08},
and we'll review this here.

Let $S_{m,k}(A,B)$ denote
the sum of all words of length $m$ in $A$ and $B$
having $k$ letters equal to $B$ and $m-k$ equal to $A$.
Thus, the coefficient of $t^k$ in the polynomial $p(t)$ of~\eqref{eq:pt} is equal to the trace of $S_{m,k}(A,B)$,
and the Lieb--Seiringer reformulation of the BMV conjecture is that this trace is always nonnegative.
An important result, due to Hillar~\cite{H07}, is that if this conjecture fails for some $(m,k)$, then it fails for all
$(m',k')$ satisfying  $k'\ge k$ and $m'-k'\ge m-k$.
We'll refer to this as Hillar's descent theorem.

One strategy that has been used to show that the trace of $S_{m,k}(A,B)$ is nonnegative for certain values of $m$ and $k$
is to let $X$ and $Y$ be formal square roots of $A$ and $B$, respectively
and, working in the algebra $\Reals\langle X,Y\rangle$ of polynomials in noncommuting variables $X$ and $Y$,
to show that $S_{m,k}(A,B)$ is equal to a sum of commutators $[g,h]=gh-hg$ and Hermitian squares
$f^*f$.
Here, the algebra $\Reals\langle X,Y\rangle$ is endowed with the involutive $*$--operation that is anti--multiplicative
and so that $X=X^*$ and $Y=Y^*$ are Hermitian.
We adopt the notation of~\cite{KS08} and say that two elements $a,b\in\Reals\langle X,Y\rangle$ are
{\em cyclically equivalent}  (written $a\cycsim b$) if they differ by a sum of commutators.
We will use repeatedly Proposition~2.3 of~\cite{KS08}, which states that two words $v$ and $w$ in $X$ and $Y$ are cyclically equivalent
if and only if they can be written $v=u_1u_2$ and $w=u_2u_1$ for words $u_1$ and $u_2$ in $X$ and $Y$,
and that two polynomials $a,b\in\Reals\langle X,Y\rangle$ are cyclically equivalent
if and only if for each cyclic equivalence
class $[w]$ of words in $X$ and $Y$,
the sum over all $v$ in $[w]$ of the coefficients $a_v$ of $a$ 
agrees with the sum over all $v$ in $[w]$ of the coefficients $b_v$ of $b$.
It is clear that any element of $\Reals\langle A,B\rangle$ that is cyclically equivalent
in $\Reals\langle X,Y\rangle$ to a sum of Hermitian squares in $\Reals\langle X,Y\rangle$ must have nonnegative
trace whenever $A$ and $B$ are replaced by positive semidefinite matrices,
and this has been the strategy used to show that $S_{m,k}(A,B)$ has nonnegative trace, for certain values of $m$ and $k$.
We will adopt the terminology of~\cite{KS08} and write $\Theta^2$ to denote the
set of elements of $\Reals\langle X,Y\rangle$ that are cyclically equivalent to sums of Hermitian squares in
$\Reals\langle X,Y\rangle$.
(It is not difficult to see that $\Theta_\Cpx^2\cap\Reals\langle X,Y\rangle=\Theta^2$,
where $\Theta_\Cpx^2$ is the analogous quantity in $\Cpx\langle X,Y\rangle$.)

Clearly, $S_{m,k}(A,B)\in\Theta^2$ if and only if $S_{m,m-k}(A,B)\in\Theta^2$.
Due to work of H\"agele~\cite{Hae07},
Landweber and Speer~\cite{LaSp}, Burgdorf~\cite{B} and Klep and Schweighofer~\cite{KS08}, it is known that
$S_{m,k}(A,B)\in\Theta^2$ holds
\begin{enumerate}[$\bullet$]
\item whenever $k\in\{0,1,2,4\}$
\item for $m=14$ and $k=6$
\item for $m\in\{7,11\}$ and $k=3$
\end{enumerate}
These cases together with Hillar's descent theorem implied that the Lieb--Seiringer formulation of the BMV--conjecture
holds for $m\le13$ (see~\cite{KS08}). 
On the other hand, it is known that 
$S_{m,k}(A,B)\notin\Theta^2$ holds
\begin{enumerate}[$\bullet$]
\item whenever $m\ge12$ or $m\in\{6,8,9,10\}$ and $k=3$
\item whenever $m\ge10$ and $5\le k\le m-5$ and either $k$ or $m$ is odd.
\end{enumerate}
It was hoped that proofs of $S_{m,k}(A,B)\in\Theta^2$ for other values of $m$ and $k$
would be posible, so as to prove the conjecture for more values of $m$, and possibly even to prove
the BMV conjecture itself.

These results left open the cases $(m,k)=(12,6)$ and $m\ge16$, $6\le k\le m-6$ with both $m$ and $k$ even.
In this paper (see Section~\ref{sec:nonsos}),
we prove $S_{m,k}(A,B)\notin\Theta^2$ whenever $m$ and $k$ are even and $6\le k\le m-10$.
Using $S_{m,k}(A,B)=S_{m,m-k}(B,A)$, this leaves open only the cases $(m,k)=(12,6)$ and $(m,k)=(16,8)$.
We resolve the first of these cases by showing, via an easier argument, $S_{12,6}(A,B)\notin\Theta^2$.
The case of $(m,k)=(16,8)$ remains open,
though, as indicated in~\cite{KS08}, numerical evidence seems to suggest it does not lie in $\Theta^2$.

Our results, thus, show that it is impossible to prove the BMV conjecture by showing that $S_{m,k}(A,B)$ is cyclically
equivalent to a sum of Hermitian squares for sufficiently many values of $m$ and $k$.
However there are other plausible approaches to showing $\Tr(S_{m,k}(A,B))\ge0$
must always hold.

Though our proofs are straightforward and easy to check by hand, to find them we calculated with
Mathematica 7.0 \cite{M7.0}, on an Apple MacBook running OS X version 10.4.11.

\medskip

While exploring, we found
(see Proposition~\ref{prop:S4m+2,4}) that if $m$ is even and is not a multiple of $4$,
then $S_{m,4}(A,B)$ is equal to a sum of commutators and Hermitian squares in $\Reals\langle A,B\rangle$.
Thus, we do not need the square roots of $A$ and $B$:
for these values of $m$ we have $\Tr(S_{m,4}(A,B))\ge0$ whenever $A$ and $B$
are Hermitian matrices.
\begin{ques}\label{ques:H}
Do we have $\Tr(S_{m,k}(A,B))\ge0$ whenever $A$ and $B$
are Hermitian matrices and $m$ and $k$ are even integers, $m\ge k$?
\end{ques}
Using Hillar's descent theorem, a positive answer to Question~\ref{ques:H} would imply the Lieb--Seiringer
formulation of the BMV conjecture.

We will prove the following theorem in Section~\ref{sec:LagrangeMult}.
It shows that Question~\ref{ques:H} has an equivalent formulation that seems easier to satisfy,
and is analogous to Theorem~1.10 of~\cite{H07}.
Note that $S_{m,k}(A,B)$ is Hermitian whenever $A$ and $B$ are Hermitian.
\begin{thm}\label{thm:negdef}
Fix $n,m,k\in\Nats$ with $m$ and $k$ even and $m\ge k$.
Then the following are equivalent:
\begin{enumerate}[(i)]
\item
\label{it:negdef-i}
for all $n\times n$ Hermitian matrices $A$ and $B$, we have $\Tr(S_{m,k}(A,B))\ge0$,
\item
\label{it:negdef-ii}
for all $n\times n$ Hermitian matrices $A$ and $B$, either $S_{m,k}(A,B)=0$ or $S_{m,k}(A,B)$ has a strictly positive eigenvalue.
\end{enumerate}
\end{thm}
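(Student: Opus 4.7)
I would prove the two directions separately.

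\textbf{(i) $\Rightarrow$ (ii).} The matrix $S_{m,k}(A,B)$ is Hermitian whenever $A,B$ are, since reversing a word of length $m$ preserves its letter composition. If $S_{m,k}(A,B)$ is nonzero and has no strictly positive eigenvalue, then being Hermitian with all eigenvalues $\le 0$ and not all zero, it has at least one strictly negative eigenvalue; hence $\Tr(S_{m,k}(A,B)) < 0$, contradicting (i).

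\textbf{(ii) $\Rightarrow$ (i).} Assume (ii) and, for contradiction, that $F(A_0,B_0) := \Tr(S_{m,k}(A_0,B_0)) < 0$ for some Hermitian $A_0, B_0$. Then $F$ is a polynomial of even degree $m$ in the real entries of $(A,B)$, homogeneous and not everywhere $\ge 0$, so $F$ attains a strictly negative minimum $c < 0$ at some $(A^*, B^*)$ on the compact real manifold
\[
\Sigma = \{(A, B) : A,B \text{ Hermitian}, \Tr(A^2) + \Tr(B^2) = 1\}.
\]

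The heart of the argument is the gradient formula, computed with respect to the real trace inner product on Hermitian matrices:
\[
\nabla_A F(A,B) = m\, S_{m-1,k}(A,B), \qquad \nabla_B F(A,B) = m\, S_{m-1,k-1}(A,B).
\]
This follows by differentiating each word and using cyclicity of the trace to rotate the perturbation to the front; a bijective count shows every word of length $m-1$ of the correct type appears exactly $m$ times in the sum over (word, position-of-letter) pairs. Lagrange multipliers at $(A^*, B^*)$ then supply $\mu \in \Reals$ with
\[
S_{m-1,k}(A^*, B^*) = \mu A^*, \qquad S_{m-1,k-1}(A^*, B^*) = \mu B^*.
\]
Taking the trace inner product of the first with $A^*$ and the second with $B^*$, summing, and applying the elementary ``split by first letter'' identity
\[
S_{m,k}(A,B) = A \cdot S_{m-1,k}(A,B) + B \cdot S_{m-1,k-1}(A,B)
\]
yields $c = F(A^*, B^*) = \mu\,(\Tr(A^{*2}) + \Tr(B^{*2})) = \mu$.

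Substituting the Lagrange relations back into the splitting identity as a matrix equation,
\[
S_{m,k}(A^*, B^*) = c\,(A^{*2} + B^{*2}).
\]
Since $A^{*2} + B^{*2}$ is positive semidefinite and $c < 0$, this matrix is negative semidefinite. If it were nonzero, (ii) would force a strictly positive eigenvalue, which is impossible; if it vanished, then $A^{*2} + B^{*2} = 0$, forcing $A^* = B^* = 0$ and contradicting $(A^*, B^*) \in \Sigma$. Either alternative gives the desired contradiction. The only delicate ingredient is the gradient formula, whose derivation is a routine combinatorial count, so I do not expect any genuine obstacle.
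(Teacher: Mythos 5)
Your proof is correct and follows essentially the same route as the paper: the gradient formula $\frac{d}{d\lambda}\big|_{\lambda=0}\Tr(S_{m,k}(A+\lambda H,B))=m\Tr(HS_{m-1,k}(A,B))$ (the paper's Lemma on Lagrange multipliers), a constrained minimum, and the split-by-first-letter identity to conclude that $S_{m,k}$ at the minimizer is a negative multiple of a positive semidefinite matrix. The only (immaterial) differences are that the paper normalizes $\Tr(A^2)=\Tr(B^2)=1$ with two multipliers, identified via Hillar's identity $\Tr(A\,S_{m-1,k}(A,B))=\frac{m-k}{m}\Tr(S_{m,k}(A,B))$, whereas you use the single sphere $\Tr(A^2)+\Tr(B^2)=1$ and identify the one multiplier via the splitting identity.
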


In Section~\ref{sec:Sm4} we also show (Proposition~\ref{prop:S84}) that
$S_{8,4}(A,B)$ is not cyclically equivalent to a sum of Hermitian squares in $\Reals\langle A,B\rangle$.
This makes the case $(m,k)=(8,4)$ of particular interest for Question~\ref{ques:H}.

Our interest in Question~\ref{ques:H} has two motivations.
One is its relation to the BMV conjecture.
Although the question is known to be stronger than the BMV conjecture and we have no particular reason to think it will be
easier to prove than the BMV conjecture itself, it is clearly related to the BMV conjecture and it
may be helpful to explore it.
A second motivation is the relation to Connes' embedding problem.
For positive semidefinite matrices
$A$ and $B$, the trace of $S_{6,3}(A,B)$ is always nonnegative, though it is not cyclically equivalent to a sum
of squares in $\Cpx\langle X,Y\rangle$;
as was pointed out in~\cite{KS08a}, this makes $S_{6,3}(A,B)$, with $A$ and $B$ positive operators in a II$_1$--factor,
an interesting test case for Connes' embedding problem.
In a similar way, if Question~\ref{ques:H} turns out to have a positive answer
for $S_{8,4}(A,B)$, then because of Proposition~\ref{prop:S84},
then it will provide another
interesting test case for Connes' embedding problem, involving self--adjoint operators.
At this point, it seems important to generate such test cases.

After a first version of this paper was circulated, we learned that S.\ Burgdorf (see Remarks~(b) and~(c)
of Section~4 of~\cite{B}) had, long previously to us, also found that if $m$ is not a multiple of $4$, then
$S_{m,4}(A,B)$ is cyclically equivalent to a sum of Hermitian squares
in $\Reals\langle A,B\rangle$;
no proof was given in~\cite{B}.

\medskip
\noindent {\em Acknowledgement}.
The authors thank an anonymous referee for suggestions that improved the exposition.

\section{Some non--sum--of--squares results}
\label{sec:nonsos}

In this section, we show that $S_{m,k}(A,B)$ is not cyclically equivalent to a sum of Hermitian
squares in $\Reals\langle X,Y\rangle$ for various values of $m$ and $k$, all of which are even.

Let $W_{q,p}(A,B)$ denote the set of all words in $A$ and $B$ containing $q$ $A$'s and $p$ $B$'s.
Let $Z$ denote the column vector whose entries are all words in $W_{\ell,k}(A,B)$ in some
fixed order, and similarly let $Z_X$ and, respectively, $Z_Y$ be column vectors containing
all elements of  $XW_{\ell-1,k}(A,B)X$, respectively, $YW_{\ell,k-1}(A,B)Y$.
Klep and Schweighofer have shown (Proposition 3.3 of~\cite{KS08}) that, for integers $k$ and $\ell$,
$S_{2(k+\ell),2k}(A,B)$ is cyclically equivalent
to a sum of Hermitian squares in $\Reals\langle X,Y\rangle$ if and only if there are real, positive
semidefinite matrices $H$, $H_X$ and $H_Y$ such that
\begin{equation}\label{eq:ZHZ}
Z^*HZ+Z_X^*H_XZ_X+Z_Y^*H_YZ_Y\cycsim S_{2(k+\ell),2k}(A,B),
\end{equation}
where $Z^*$ denotes the row vector whose entries are the adjoints of the entries of $Z$, {\em etc}.
Let us denote the matrix entry of $H$ corresonding to words $u,v\in W_{\ell,k}(A,B)$ by $H(u,v)$, and similarly
for $H_X$ and $H_Y$.
Thus, we have
\begin{equation}\label{eq:ZHZonly}
Z^*HZ=\sum_{u,v\in W_{\ell,k}(A,B)}H(u,v)u^*v,
\end{equation}
and similarly for the other two terms.

\begin{remark}\label{rem:Huvstar}
If $H$ is a matrix as appearing in~\eqref{eq:ZHZonly}, and if $\Hh$ is the matrix
defined by $\Hh(u,v)=H(u^*,v^*)$, then
\[
Z^*\Hh Z=\sum_{u,v}H(u^*,v^*)u^*v=\sum_{u,v}H(u,v)uv^*\cycsim\sum_{u,v}H(v,u)v^*u=Z^*HZ,
\]
where the last equality uses that $H$ is symmetric.
In a similar way, defining $\Hh_X(u,v)=\Hh_X(u^*,v^*)$ and
$\Hh_Y(u,v)=\Hh_Y(u^*,v^*)$, we have
\begin{align*}
Z_X^*\Hh_XZ_X&\cycsim Z_X^*H_XZ_X \\
Z_Y^*\Hh_YZ_Y&\cycsim Z_Y^*H_YZ_Y.
\end{align*}
Consequently, if $H$, $H_X$ and $H_Y$ are such that~\eqref{eq:ZHZ} holds, then by replacing $H$ with
$(H+\Hh)/2$, if necessary, and similarly for $H_X$ and $H_Y$, 
we may without loss of generality assume
\begin{alignat}{2}
H(u,v)&=H(u^*,v^*),\qquad&(u,v&\in W_{\ell,k}(A,B)), \label{eq:Huv} \\
H_X(u,v)&=H_X(u^*,v^*),\qquad&(u,v&\in XW_{\ell-1,k}(A,B)X), \label{eq:HXuv} \\
H_Y(u,v)&=H_Y(u^*,v^*),\qquad&(u,v&\in YW_{\ell,k-1}(A,B)Y). \label{eq:HYuv}
\end{alignat}

Suppose, furthermore, we have $k=\ell$.
Let $\sigma$ is the map on words that exchanges $A$ and $B$ and exchanges $X$ and $Y$, extended by linearity to
$\Reals\langle X,Y\rangle$.
Then $\sigma(Z^*HZ)=Z^*H^\sigma Z$, where $H^\sigma(u,v)=H(\sigma(u),\sigma(v))$, and, similarly,
$\sigma(Z_X^*H_XZ_X)=Z_Y^*H_X^\sigma Z_Y$ and $\sigma(Z_Y^*H_YZ_Y)=Z_X^* H_Y^\sigma Z_X$, where
$H^\sigma_X(u,v)=H_X(\sigma(u),\sigma(v))$ and $H^\sigma_Y(u,v)=H_Y(\sigma(u),\sigma(v))$.
Consequently, if $H$, $H_X$ and $H_Y$
are such that~\eqref{eq:ZHZ} holds, then since $S_{2(k+\ell),2k}(A,B)$ is $\sigma$--invariant and since $\sigma$
respects $\cycsim$, by replacing $H$ with
$(H+H^\sigma)/2$, $H_X$ with $(H_X+H_Y^\sigma)/2$ and $H_Y$ with $(H_Y+H_X^\sigma)/2$, if necessary,
we may without loss of generality assume
\begin{alignat}{2}
H(\sigma(u),\sigma(v))&=H(u,v),\qquad&(u,v&\in W_{\ell,k}(A,B)), \label{eq:Hsig} \\
H_Y(\sigma(u),\sigma(v))&=H_X(u,v),\qquad&(u,v&\in XW_{\ell-1,k}(A,B)X). \label{eq:HYsig} 
\end{alignat}
Since $\sigma(u^*)=\sigma(u)^*$, we can assume that~\eqref{eq:Huv}--\eqref{eq:HYuv}
and \eqref{eq:Hsig}--\eqref{eq:HYsig} hold simultaneously.

We note that the relation~\eqref{eq:Huv} will be used in this section, while~\eqref{eq:Hsig} will
be used only in the proof of Proposition~\ref{prop:S84}, and the conditions on $H_X$ and $H_Y$ won't be needed at all in this paper.
\end{remark}

\begin{remark}\label{rem:wuv}
For a given word $w\in W_{2\ell,2k}(A,B)$, we are interested in the different ways we can have
\begin{alignat}{2}
w&\cycsim u^*v,\qquad&(u,v&\in W_{\ell,k}(A,B)), \label{eq:wuv} \\
w&\cycsim u_X^*v_X,\qquad&(u_X,v_X&\in XW_{\ell-1,k}(A,B)X), \label{eq:wXuv} \\
w&\cycsim u_Y^*v_Y,\qquad&(u_Y,v_Y&\in YW_{\ell,k-1}(A,B)Y). \label{eq:wYuv}
\end{alignat}
Indeed, if $|[w]|$ denotes the number of different elements of $W_{2\ell,2k}(A,B)$ that are
cyclically equivalent to $w$, and assuming~\eqref{eq:ZHZ} holds, then we have
\begin{multline}\label{eq:Hsums}
|[w]|=\sum_{\{(u,v)\mid u^*v\cycsim w\}}H(u,v)
+\sum_{\{(u_X,v_X)\mid u_X^*v_X\cycsim w\}}H_X(u_X,v_X) \\
+\sum_{\{(u_Y,v_Y)\mid u_Y^*v_Y\cycsim w\}}H_Y(u_Y,v_Y)
\end{multline}
where the respective sums are over
all pairs $(u,v)$ such that~\eqref{eq:wuv} holds,
all pairs $(u_X,v_X)$ such that~\eqref{eq:wXuv} holds
and all pairs $(u_Y,v_Y)$ such that~\eqref{eq:wYuv} holds.
To find all the ways we have~\eqref{eq:wuv},
we can write down all the cyclic permutations of $w$ and record those
for which the first $k+\ell$ letters consists of $\ell$ $A$'s and $k$ $B$'s.
Furthermore, if we have an instance of~\eqref{eq:wXuv} with $u_X=Xu'X$ and $v_X=Xv'X$,
$u',v'\in W_{\ell-1,k}(A,B)$, then $w\cycsim X(u')^*Av'X\cycsim A(u')^*Av'$; this yields an instance
of~\eqref{eq:wuv}, where both $u^*$ and $v$ start with $A$, and clearly each such instance
corresponds in this manner to an instance of~\eqref{eq:wXuv}.
Similarly, the instances of~\eqref{eq:wYuv} are in one--to--one correspondence with those
instances of~\eqref{eq:wuv} where both $u^*$ and $v$ start with $B$.
\end{remark}

We will apply (in a finite dimensional setting) the following elementary lemma, whose proof we provide for completeness.
\begin{lemma}\label{lem:kerpos}
Let $\HEu=\HEu_1\oplus\HEu_2$ be an orthogonal direct sum decomposition of a Hilbert space
and let $T\in B(\HEu)$ be a positive operator: $T\ge0$.
With respect to the given decomposition of $\HEu$, write $T$ in block form
\[
T=\left(\begin{matrix}T_{11}&T_{12}\\T_{21}&T_{22}\end{matrix}\right),
\]
where $T_{ij}:\HEu_j\to\HEu_i$.
Suppose $v\in\ker T_{11}\subseteq\HEu_1$.
Then $v\in\ker T_{21}$.
\end{lemma}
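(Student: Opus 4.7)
The plan is to use the standard positivity trick: identify $v$ with $v\oplus 0\in\HEu$ and show that already $T(v\oplus 0)=0$; then the $\HEu_2$--component of $T(v\oplus 0)$, which is precisely $T_{21}v$, must vanish.

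First I would compute the quadratic form of $T$ on the vector $v\oplus 0$. Using the block decomposition,
\[
\langle T(v\oplus 0),\,v\oplus 0\rangle_\HEu = \langle T_{11}v,v\rangle_{\HEu_1} = 0,
\]
the last equality because $v\in\ker T_{11}$.

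Next I would exploit $T\ge 0$ by taking its positive square root $T^{1/2}\in B(\HEu)$. Since
\[
\|T^{1/2}(v\oplus 0)\|^2 = \langle T(v\oplus 0),\,v\oplus 0\rangle = 0,
\]
we conclude $T^{1/2}(v\oplus 0)=0$, and therefore $T(v\oplus 0)=T^{1/2}T^{1/2}(v\oplus 0)=0$.

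Finally I would read off the components: writing $T(v\oplus 0)=T_{11}v\oplus T_{21}v$, the vanishing of $T(v\oplus 0)$ forces $T_{21}v=0$, as desired. There is no real obstacle — the only point worth noting is that one needs positivity of the full operator $T$ (not merely self-adjointness) in order to pass from the vanishing of the quadratic form at $v\oplus 0$ to the vanishing of $T(v\oplus 0)$ itself; an alternative phrasing avoiding the square root would use the generalized Cauchy--Schwarz inequality $|\langle Tx,y\rangle|^2\le \langle Tx,x\rangle\,\langle Ty,y\rangle$ valid for any positive operator, applied with $x=v\oplus 0$ and $y$ arbitrary, to conclude $T(v\oplus 0)=0$ directly.
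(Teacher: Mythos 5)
Your argument is correct, but it takes a different route from the paper's. You derive $\langle T(v\oplus 0),v\oplus 0\rangle=\langle T_{11}v,v\rangle=0$ and then pass from the vanishing of the quadratic form to $T(v\oplus 0)=0$ by invoking either the positive square root $T^{1/2}$ or the generalized Cauchy--Schwarz inequality $|\langle Tx,y\rangle|^2\le\langle Tx,x\rangle\langle Ty,y\rangle$; reading off the $\HEu_2$--component then gives $T_{21}v=0$ (and in fact your argument shows the slightly stronger statement $T(v\oplus 0)=0$). The paper instead argues by contradiction with a bare-hands perturbation: if $T_{21}v\ne0$, pick $w\in\HEu_2$ with $\RealPart\langle T_{21}v,w\rangle<0$ and expand
\[
\langle T(v\oplus tw),v\oplus tw\rangle=2t\,\RealPart\langle T_{21}v,w\rangle+t^2\langle T_{22}w,w\rangle,
\]
which is negative for small $t>0$, contradicting $T\ge0$. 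The paper's proof is thus completely self-contained --- it is essentially the first-order argument underlying Cauchy--Schwarz, carried out on the spot, using only the definition of positivity --- whereas yours is shorter but imports a standard operator-theoretic fact (existence of $T^{1/2}$, or the pre-packaged Cauchy--Schwarz inequality for the positive form $(x,y)\mapsto\langle Tx,y\rangle$). In the finite-dimensional setting in which the lemma is applied, both ingredients are unproblematic, so either proof is acceptable; your remark that positivity (not mere self-adjointness) is the essential hypothesis is also on point.
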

\begin{proof}
If $T_{21}v\ne0$, then there is $w\in\HEu_2$ such that $\langle T_{21}v,w\rangle<0$.
Letting $t>0$ and using $T_{12}=T_{21}^*$, we have
\begin{equation}\label{eq:T21}
\langle T(v\oplus tw),v\oplus tw\rangle=2t\RealPart\langle T_{21}v,w\rangle+t^2\langle T_{22}w,w\rangle.
\end{equation}
But taking $t$ small enough forces the right--hand--side of~\eqref{eq:T21} to be negative,
which contradicts $T\ge0$.
\end{proof}

\begin{prop}\label{prop:Skl}
Let $k$ and $\ell$ be integers, $k\ge3$ and $\ell\ge5$.
Then $S_{2(\ell+k),2k}(A,B)$ is not cyclically equivalent to a sum of Hermitian squares
in $\Reals\langle X,Y\rangle$.
\end{prop}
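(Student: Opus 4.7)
The plan is to argue by contradiction. Assume $S_{2(\ell+k),2k}(A,B)\in\Theta^2$. Then by Proposition~3.3 of~\cite{KS08} there exist real positive semidefinite matrices $H,H_X,H_Y$ satisfying~\eqref{eq:ZHZ}, and by Remark~\ref{rem:Huvstar} we may further assume the symmetry~\eqref{eq:Huv}. The engine of the proof is the exact linear identity~\eqref{eq:Hsums}, which, for each cyclic class $[w]$ in $W_{2\ell,2k}(A,B)$, pins down a relation among a small number of entries of $H$, $H_X$, and $H_Y$. The strategy is to choose a short sequence of diagnostic words whose class equations, combined with the positive semidefiniteness of $H$ (through Lemma~\ref{lem:kerpos}), force a contradiction.

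First I would compute~\eqref{eq:Hsums} for the extreme word $w_0=A^{2\ell}B^{2k}$. A direct enumeration using the cutting-the-circle description of Remark~\ref{rem:wuv} shows that the only admissible factorizations $w_0\cycsim u^*v$ with $u,v\in W_{\ell,k}(A,B)$ are the two diagonal pairs $(A^\ell B^k,A^\ell B^k)$ and $(B^kA^\ell,B^kA^\ell)$, and that there are no contributions to the $H_X$ or $H_Y$ sums (because in neither factorization do $u^*$ and $v$ begin with the same letter). This yields
\[
H(A^\ell B^k,A^\ell B^k)+H(B^kA^\ell,B^kA^\ell) = 2(\ell+k),
\]
fixing the total mass carried by these two diagonal entries.

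Next I would search for diagnostic words $w_1$ whose cyclic classes are tight enough that~\eqref{eq:Hsums} forces a specific diagonal entry $H(u_0,u_0)$ to vanish. Natural candidates are perturbations of $w_0$ such as $A^{2\ell-1}BAB^{2k-1}$, or words with a single letter displaced from its block, tuned so that the factorizations break the earlier diagonal structure and yield equations of the form ``small integer'' equals ``diagonal entry $+$ known quantities''. The lower bounds $k\ge 3$ and $\ell\ge 5$ are presumably just what is needed to ensure enough room for such perturbations to kill spurious factorizations while keeping the class size small. Once a zero $H(u_0,u_0)=0$ is established, Lemma~\ref{lem:kerpos} applied with $\HEu_1$ spanned by the basis vector of $u_0$ gives $H(u_0,v)=0$ for every $v\in W_{\ell,k}(A,B)$, and by~\eqref{eq:Huv} the same holds with $u_0$ replaced by $u_0^*$. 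Parallel reasoning applies to $H_X$ and $H_Y$.

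Finally I would locate a third diagnostic word $w_2$ for which the right-hand side of~\eqref{eq:Hsums} consists entirely of entries already shown to vanish, while $|[w_2]|>0$, producing the desired contradiction. The main obstacle I foresee is combinatorial: the selection of the right $w_1,w_2$ requires a search over many candidate shapes, which is exactly what Mathematica was used for (as the authors note), even though the final verification is elementary. A subtler but essential technical ingredient is the systematic use of the symmetry~\eqref{eq:Huv} to double the reach of each zero entry deduced from Lemma~\ref{lem:kerpos}, which is needed to shrink the support of $H$ enough to reach the final contradiction uniformly in $k\ge 3$ and $\ell\ge 5$.
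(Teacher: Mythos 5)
Your setup is the same as the paper's (argue by contradiction via Proposition~3.3 of~\cite{KS08}, impose~\eqref{eq:Huv}, and exploit the class equations~\eqref{eq:Hsums} together with Lemma~\ref{lem:kerpos}), and your first concrete step is correct: for $w=A^{2\ell}B^{2k}$ the only factorizations are $u_1^*u_1$ and $v_1^*v_1$ with $u_1=A^\ell B^k$, $v_1=u_1^*$, there are no $H_X$ or $H_Y$ contributions, and the class has $2(k+\ell)$ members, so $H(u_1,u_1)+H(v_1,v_1)=2(k+\ell)$ (and with~\eqref{eq:Huv}, $H(u_1,u_1)=k+\ell$). But from that point on you have only a search plan, not a proof: the entire combinatorial core --- which diagnostic words to take, what their factorization tables look like, and what linear relations they impose --- is left to a hypothetical computer search, so the argument is not completed for any $(k,\ell)$, let alone uniformly for all $k\ge3$, $\ell\ge5$.

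Moreover, the mechanism you propose is different from what actually works, and it is not clear it can work. You aim to force some diagonal entry $H(u_0,u_0)=0$, use Lemma~\ref{lem:kerpos} to annihilate its row and column, and then exhibit a class $[w_2]$ with $|[w_2]|>0$ whose equation involves only annihilated entries. Since every class equation has strictly positive left-hand side $|[w]|$ and the relevant right-hand sides always contain diagonal or near-diagonal terms, forcing an exact zero this way is delicate and you give no candidate where it happens. The paper's proof runs in the opposite direction: it chooses four further words ($A^{2\ell-2}B^{k-1}A^2B^{k+1}$, $A^{\ell+1}B^2A^{\ell-1}B^{2k-2}$, $A^{2\ell-4}B^{k-1}A^2B^2A^2B^{k-1}$, $A^{\ell-1}B^2A^{\ell-1}B^{k-1}A^2B^{k-1}$), each with exactly $2(k+\ell)$ cyclic forms and no $X$- or $Y$-factorizations, and their equations force the entries $H(u_1,u_2)$, $H(u_1,u_3)$, $H(u_2,u_2)$ to the \emph{maximal} value $k+\ell$, together with $2H(u_2,u_3)+H(u_4,u_4)=k+\ell$ (this is where $\ell\ge5$ is genuinely used, to rule out extra factorizations of the fourth word). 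The $2\times2$ block on $\{u_1,u_2\}$ is then singular with kernel vector $(1,-1)$, so Lemma~\ref{lem:kerpos} pins $H(u_2,u_3)=k+\ell$, and the last equation forces $H(u_4,u_4)=-(k+\ell)<0$, contradicting positive semidefiniteness. So the contradiction comes from a forced negative diagonal entry, not from a vanishing one; to salvage your write-up you would need to either exhibit explicit words realizing your ``zero-entry'' scheme (and verify it for all $k\ge3$, $\ell\ge5$), or switch to the saturation argument just described.
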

\begin{proof}
Suppose the contrary, to obtain a contradiction.
Let $H$, $H_X$ and $H_Y$ be real, positive semidefinite matrices so that~\eqref{eq:ZHZ} holds,
and without loss of generality
assume also the property \eqref{eq:Huv} in Remark~\ref{rem:Huvstar} holds.
 
We consider five elements of $W_{2\ell,2k}(A,B)$ and the different ways of writing them as in~\eqref{eq:wuv}.
These elements are
\begin{alignat*}{2}
w_1&=A^{2\ell}B^{2k}\qquad&w_2&=A^{2\ell-2}B^{k-1}A^2B^{k+1} \\
w_3&=A^{\ell+1}B^2A^{\ell-1}B^{2k-2}\qquad&w_4&=A^{2\ell-4}B^{k-1}A^2B^2A^2B^{k-1} \\
w_5&=A^{\ell-1}B^2A^{\ell-1}B^{k-1}A^2B^{k-1}\qquad
\end{alignat*}
and their factorizations will be in terms of the elements
\begin{alignat*}{2}
u_1&=A^\ell B^k,\qquad&v_1=u_1^*&=B^kA^\ell \\
u_2&=A^{\ell-2}B^{k-1}A^2B,\qquad&v_2=u_2^*&=BA^2B^{k-1}A^{\ell-2} \\
u_3&=AB^2A^{\ell-1}B^{k-2},\qquad&v_3=u_3^*&=B^{k-2}A^{\ell-1}B^2A \\
u_4&=AB^{k-1}A^{\ell-1}B,\qquad&v_4=u_4^*&=BA^{\ell-1}B^{k-1}A
\end{alignat*}
of $W_{\ell,k}(A,B)$.
Note that these are all distinct if $k\ge4$;
in the case $k=3$, the six elements $u_1,u_2,u_3,v_1,v_2,v_3$ are distinct but we have $u_4=u_3$ and $v_4=v_3$.
This will not bother us.

We begin with the easiest of the $w_j$ to factorize, namely, $w_1$.
In the Table~\ref{tab:w1} are listed all the cyclically equivalent forms of $w_1$ and it is indicated which 
of these can be factored as in~\eqref{eq:wuv}.
\begin{table}[hb]
\caption{Forms of $w_1=A^{2\ell}B^{2k}$ and factorizations as in~\eqref{eq:wuv}.}
\label{tab:w1}
\begin{tabular}{l||l|l}
cyclically equivalent form & $j$ value & factorization \\\hline\hline
\rule{0ex}{2.5ex}$A^jB^{2k}A^{2\ell-j}\quad(1\le j\le 2\ell)$ & $j=\ell$ & $v_1^*v_1$ \\\hline
\rule{0ex}{2.5ex}$B^jA^{2\ell}B^{2k-j}\quad(1\le j\le 2k)$ & $j=k$ & $u_1^*u_1$ \\\hline
\end{tabular}
\end{table}

This also shows that there are no factorizations as in~\eqref{eq:wXuv} or~\eqref{eq:wYuv}
(see Remark~\ref{rem:wuv}).
Since $w_1$ has $2(k+\ell)$ cyclically equivalent forms,
by~\eqref{eq:Hsums}
we must have $H(u_1,u_1)+H(v_1,v_1)=2(k+\ell)$.
Since we have $H(v_1,v_1)=H(u_1,u_1)$, we get
\begin{equation}\label{eq:w1}
H(u_1,u_1)=k+\ell.
\end{equation}

The cyclically equivalent forms and all factorizations of $w_2$, $w_3$, $w_4$ and $w_5$ as in~\eqref{eq:wuv}
are given in Tables~\ref{tab:w2}--\ref{tab:w5}.
(Note that the assertions in rows 2, 3 and 6 of Table~\ref{tab:w4} do require $\ell\ge5$.)

\begin{table}[ht]
\caption{Forms of $w_2=A^{2\ell-2}B^{k-1}A^2B^{k+1}$ and factorizations as in~\eqref{eq:wuv}.}
\label{tab:w2}
\begin{tabular}{l||l|l}
\multicolumn{1}{c||}{cyclically equivalent form} & $j$ value & factorization \\
\hline\hline
\rule{0ex}{2.5ex}$A^jB^{k-1}A^2B^{k+1}A^{2\ell-2-j}\quad(1\le j\le 2\ell-2)$ & $j=\ell-2$ & $v_2^*v_1$ \\\hline
\rule{0ex}{2.5ex}$B^jA^2B^{k+1}A^{2\ell-2}B^{k-1-j}\quad(1\le j\le k-1)$ & \multicolumn{2}{c}{none} \\\hline
\rule{0ex}{2.5ex}$A^jB^{k+1}A^{2\ell-2}B^{k-1}A^{2-j}\quad(1\le j\le2)$ & \multicolumn{2}{c}{none} \\\hline
\rule{0ex}{2.5ex}$B^jA^{2\ell-2}B^{k-1}A^2B^{k+1-j}\quad(1\le j\le k+1)$ & $j=k$ & $u_1^*u_2$ \\\hline
\end{tabular}
\end{table}

\vskip2ex

\begin{table}[ht]
\caption{Forms of $w_3=A^{\ell+1}B^2A^{\ell-1}B^{2k-2}$ and factorizations as in~\eqref{eq:wuv}.}
\label{tab:w3}
\begin{tabular}{l||l|l}
\multicolumn{1}{c||}{cyclically equivalent form} & $j$ value & factorization \\
\hline\hline
\rule{0ex}{2.5ex}$A^jB^2A^{\ell-1}B^{2k-2}A^{\ell+1-j}\quad(1\le j\le\ell+1)$ & $j=1$ & $v_3^*v_1$ \\\hline
\rule{0ex}{2.5ex}$B^jA^{\ell-1}B^{2k-2}A^{\ell+1}B^{2-j}\quad(1\le j\le2)$ & \multicolumn{2}{c}{none} \\\hline
\rule{0ex}{2.5ex}$A^jB^{2k-2}A^{\ell+1}B^{2}A^{\ell-1-j}\quad(1\le j\le\ell-1)$ & \multicolumn{2}{c}{none} \\\hline
\rule{0ex}{2.5ex}$B^jA^{\ell+1}B^{2}A^{\ell-1}B^{2k-2-j}\quad(1\le j\le2k-2)$ & $j=k$ & $u_1^*u_3$ \\\hline
\end{tabular}
\end{table}

\vskip2ex

\begin{table}[ht]
\caption{Forms of $w_4=A^{2\ell-4}B^{k-1}A^2B^2A^2B^{k-1}$ and factorizations as in~\eqref{eq:wuv}.}
\label{tab:w4}
\begin{tabular}{l||l|l}
\multicolumn{1}{c||}{cyclically equivalent form} & $j$ value & factorization \\
\hline\hline
\rule{0ex}{2.5ex}$A^jB^{k-1}A^2B^2A^2B^{k-1}A^{2\ell-4-j}\quad(1\le j\le2\ell-4)$ & $j=\ell-2$ & $v_2^*v_2$ \\\hline
\rule{0ex}{2.5ex}$B^jA^2B^2A^2B^{k-1}A^{2\ell-4}B^{k-1-j}\quad(1\le j\le k-1)$ & \multicolumn{2}{c}{none} \\\hline
\rule{0ex}{2.5ex}$A^jB^2A^2B^{k-1}A^{2\ell-4}B^{k-1}A^{2-j}\quad(1\le j\le2)$ & \multicolumn{2}{c}{none} \\\hline
\rule{0ex}{2.5ex}$B^jA^2B^{k-1}A^{2\ell-4}B^{k-1}A^2B^{2-j}\quad(1\le j\le2)$ & $j=1$ & $u_2^*u_2$ \\\hline
\rule{0ex}{2.5ex}$A^jB^{k-1}A^{2\ell-4}B^{k-1}A^2B^2A^{2-j}\quad(1\le j\le2)$ & \multicolumn{2}{c}{none} \\\hline
\rule{0ex}{2.5ex}$B^jA^{2\ell-4}B^{k-1}A^2B^2A^2B^{k-1-j}\quad(1\le j\le k-1)$ & \multicolumn{2}{c}{none} \\\hline
\end{tabular}
\end{table}

\vskip2ex

\begin{table}[ht]
\caption{Forms of $w_5=A^{\ell-1}B^2A^{\ell-1}B^{k-1}A^2B^{k-1}$ and factorizations as in~\eqref{eq:wuv}.}
\label{tab:w5}
\begin{tabular}{l||l|l}
\multicolumn{1}{c||}{cyclically equivalent form} & $j$ value & factorization \\
\hline\hline
\rule{0ex}{2.5ex}$A^jB^2A^{\ell-1}B^{k-1}A^2B^{k-1}A^{\ell-1-j}\quad(1\le j\le\ell-1)$ & $j=1$ & $v_3^*v_2$ \\\hline
\rule{0ex}{2.5ex}$B^jA^{\ell-1}B^{k-1}A^2B^{k-1}A^{\ell-1}B^{2-j}\quad(1\le j\le2)$ & $j=1$ & $u_4^*u_4$ \\\hline
\rule{0ex}{2.5ex}$A^jB^{k-1}A^2B^{k-1}A^{\ell-1}B^2A^{\ell-1-j}\quad(1\le j\le\ell-1)$ & $j=\ell-2$ & $v_2^*v_3$ \\\hline
\rule{0ex}{2.5ex}$B^jA^2B^{k-1}A^{\ell-1}B^2A^{\ell-1}B^{k-1-j}\quad(1\le j\le k-1)$ & $j=1$ & $u_2^*u_3$ \\\hline
\rule{0ex}{2.5ex}$A^jB^{k-1}A^{\ell-1}B^2A^{\ell-1}B^{k-1}A^{2-j}\quad(1\le j\le2)$ & $j=1$ & $v_4^*v_4$ \\\hline
\rule{0ex}{2.5ex}$B^jA^{\ell-1}B^2A^{\ell-1}B^{k-1}A^2B^{k-1-j}\quad(1\le j\le k-1)$ & $j=k-2$ & $u_3^*u_2$ \\\hline
\end{tabular}
\end{table}

From these, we see that each of the words $w_j$, $2\le j\le 5$ has $2(k+\ell)$ different cyclically equivalent
forms, and none have factorizations
involving $X$ or $Y$, as in~\eqref{eq:wXuv} or~\eqref{eq:wYuv}.
Looking at the two factorizations of $w_2$, and using~\eqref{eq:Hsums} and $H(v_2,v_1)=H(v_1,v_2)=H(u_1,u_2)$, we conclude
\begin{equation}\label{eq:w2}
H(u_1,u_2)=k+\ell.
\end{equation}
Similarly, considering all the factorizations of $w_3$, $w_4$ and $w_5$ we get, respectively,
\begin{align}
H(u_1,u_3)&=k+\ell \label{eq:w3} \\
H(u_2,u_2)&=k+\ell \label{eq:w4} \\
2H(u_2,u_3)+H(u_4,u_4)&=k+\ell. \label{eq:w5}
\end{align}
Now from equations~\eqref{eq:w1}--\eqref{eq:w4}, for the
$3\times 3$ submatrix of $H$ corresponding to the entries $u_1,u_2,u_3$, we have
\begin{equation}\label{eq:Hu123}
\left(\begin{matrix}
 H(u_1,u_1)&H(u_1,u_2)&H(u_1,u_3) \\
 H(u_1,u_2)&H(u_2,u_2)&H(u_2,u_3) \\
 H(u_1,u_3)&H(u_2,u_3)&H(u_3,u_3)
 \end{matrix}\right)
=
\left(\begin{matrix}
 k+\ell&k+\ell&k+\ell \\
 k+\ell&k+\ell&H(u_2,u_3) \\
 k+\ell&H(u_2,u_3)&H(u_3,u_3)
\end{matrix}\right).
\end{equation}
From~\eqref{eq:Hu123}, the positivity of $H$ and Lemma~\ref{lem:kerpos}, we obtain also $H(u_2,u_3)=k+\ell$.
But then, from~\eqref{eq:w5}, we must have $H(u_4,u_4)=-(k+\ell)$, which contradicts the positive semidefiniteness of $H$.
\end{proof}

\begin{prop}\label{prop:S12,6}
$S_{12,6}(A,B)$ is not cyclically equivalent to a sum of squares in $\Reals\langle X,Y\rangle$.
\end{prop}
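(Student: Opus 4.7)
The plan is to assume for contradiction that $S_{12,6}(A,B)\in\Theta^2$ and apply the framework developed for Proposition~\ref{prop:Skl}, now in the case $\ell=k=3$. Klep and Schweighofer's criterion then yields real, positive semidefinite matrices $H$, $H_X$, $H_Y$ indexed by $W_{3,3}(A,B)$, $XW_{2,3}(A,B)X$, and $YW_{3,2}(A,B)Y$ respectively, satisfying~\eqref{eq:ZHZ}, and by Remark~\ref{rem:Huvstar} we may assume that $H$ satisfies the $*$-symmetry~\eqref{eq:Huv}, as well as the $\sigma$-symmetry~\eqref{eq:Hsig} (which is available because $m=2k$).

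The hypothesis $\ell\ge 5$ in Proposition~\ref{prop:Skl} fails here, and several of the words $w_1,\dots,w_5$ used there collapse or acquire extra factorizations. This collapse is what the author presumably exploits to get an easier argument: fewer words need to be tabulated. I would begin, as before, with $w_1=A^6B^6$; its only cyclically-equivalent-form factorizations are the two diagonal ones $u_1^*u_1$ and $v_1^*v_1$ with $u_1=A^3B^3$, $v_1=B^3A^3$, and they involve no factorizations of the form~\eqref{eq:wXuv} or~\eqref{eq:wYuv}, so we immediately get $H(A^3B^3,A^3B^3)=6$. Then I would choose one or two additional words in $W_{6,6}(A,B)$ whose factorizations as $u^*v$ are few in number but pin down further entries of a small principal submatrix of $H$ containing the diagonal entry $H(A^3B^3,A^3B^3)$. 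Natural candidates, motivated by the degenerate analogues of $w_2,w_3$, are $w_2'=A^4B^2A^2B^4$ and $w_3'=A^3B^3A^3B^3$ (the latter being highly symmetric and hence having a small cyclic class); their cyclic forms can be enumerated by hand.

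Having tabulated the factorizations, \eqref{eq:Hsums} gives a linear equation for each chosen $w$, expressing $|[w]|$ as a sum of entries of $H$, $H_X$ and $H_Y$. Because $H,H_X,H_Y\ge 0$, each individual contribution on the right hand side is a sum of nonnegative-quadratic-form values, which combined with~\eqref{eq:Huv} and~\eqref{eq:Hsig} should force a specific off-diagonal entry of $H$ to attain its maximal possible value compatible with positivity. Invoking Lemma~\ref{lem:kerpos} on a suitable $2\times 2$ or $3\times 3$ principal submatrix of $H$ would then propagate this saturation to another off-diagonal entry, and a final equation from one of the chosen $w$'s would force a further entry of $H$ (or of $H_X$, $H_Y$) to be strictly negative, contradicting positive semidefiniteness.

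The main obstacle, just as in Proposition~\ref{prop:Skl}, is the bookkeeping of factorizations, complicated here by the fact that at $\ell=k=3$ many cyclic rotations begin with $AA$ or $BB$ and thereby give rise to factorizations of type~\eqref{eq:wXuv} or~\eqref{eq:wYuv} that contribute positively to the right side of~\eqref{eq:Hsums}. I would handle this either by choosing words whose rotations avoid $AA$ and $BB$ at the boundary, or by noting that the extra $H_X$, $H_Y$ contributions only tighten the inequalities on the entries of $H$ and therefore cannot rescue positivity. I expect the argument to reduce, as in Proposition~\ref{prop:Skl}, to exhibiting a single entry of $H$ forced by positivity saturation to take a negative value.
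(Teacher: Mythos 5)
Your setup is right (the Klep--Schweighofer criterion, the symmetries of Remark~\ref{rem:Huvstar}, equation~\eqref{eq:Hsums}, and the words $A^6B^6$ and $A^4B^2A^2B^4$, which do give $H(A^3B^3,A^3B^3)=6$ and $H(A^3B^3,AB^2A^2B)=6$), but the decisive step is missing, and the word you propose to supply it does not work. The paper's contradiction comes from the word $w_8=A^2B^2A^2B^2A^2B^2$: it has only four cyclically distinct forms, no factorizations of type~\eqref{eq:wXuv} or~\eqref{eq:wYuv}, and exactly the two factorizations $u_6^*u_6$ and $v_6^*v_6$ with $u_6=AB^2A^2B$, $v_6=u_6^*$, so~\eqref{eq:Hsums} forces the \emph{diagonal} entry $H(u_6,u_6)=2$. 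The contradiction is then that the $2\times2$ principal submatrix $\left(\begin{smallmatrix}6&6\\6&2\end{smallmatrix}\right)$ indexed by $\{A^3B^3,\,AB^2A^2B\}$ is not positive semidefinite; no entry of $H$ is forced to be negative, and neither Lemma~\ref{lem:kerpos} nor any ``saturation'' step in the style of Proposition~\ref{prop:Skl} is needed. Your proposed third word $A^3B^3A^3B^3$ cannot play this role: all six of its cyclic rotations split into two blocks each containing three $A$'s and three $B$'s, so~\eqref{eq:Hsums} for it involves six off-diagonal entries of $H$ at words such as $A^2B^3A$, $AB^3A^2$, $BA^3B^2$, $B^2A^3B$, plus three $H_X$ and three $H_Y$ contributions, and it pins down nothing relevant to the submatrix you need.

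A second genuine flaw is your fallback claim that the $H_X$, $H_Y$ contributions ``only tighten the inequalities and cannot rescue positivity.'' The terms entering~\eqref{eq:Hsums} are individual matrix entries, and the $X$- and $Y$-type contributions arising from your word $A^3B^3A^3B^3$ are \emph{off-diagonal} entries of $H_X$ and $H_Y$ (e.g.\ $H_X(XB^3A^2X,\,XA^2B^3X)$), which a positive semidefinite matrix is perfectly free to make negative; so they can offset the $H$-terms and no inequality on $H$ alone follows. The paper sidesteps this entirely by choosing words none of whose cyclic forms begin with a block starting in $A$ for both halves or in $B$ for both halves, so that only $H$-entries appear. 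As written, your argument ends with an expectation rather than a contradiction, and with the word $A^3B^3A^3B^3$ in place of $(A^2B^2)^3$ that expectation cannot be realized; replacing it by $w_8$ and comparing the resulting $2\times2$ block with positivity is what completes the proof.
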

\begin{proof}
This is like the proof of Proposition~\ref{prop:Skl}, but easier.
Again we assume, to obtain a contradiction,
that $H$, $H_X$ and $H_Y$ are real, positive semidefinite matrices
such that~\eqref{eq:ZHZ} holds (with $k=\ell=3$) and that the properties~\eqref{eq:wuv}--\eqref{eq:wYuv} hold.
We need only consider the words
\[
w_6=A^6B^6,\qquad w_7=A^4B^2A^2B^4,\qquad w_8=A^2B^2A^2B^2A^2B^2
\]
in $W_{6,6}(A,B)$
and their factorizations, which will be in terms of the elements
\begin{alignat*}{2}
u_5&=A^3B^3,\qquad&v_5=u_5^*&=B^3A^3 \\
u_6&=AB^2A^2B,\qquad&v_6=u_6^*&=BA^2B^2A
\end{alignat*}
of $W_{3,3}(A,B)$.
These factorizations are given in Tables~\ref{tab:w6}--\ref{tab:w8}.
\begin{table}[ht]
\caption{Forms of $w_6=A^6B^6$ and factorizations as in~\eqref{eq:wuv}.}
\label{tab:w6}
\begin{tabular}{l||l|l}
cyclically equivalent form & $j$ value & factorization \\\hline\hline
\rule{0ex}{2.5ex}$A^jB^6A^{6-j}\quad(1\le j\le 6)$ & $j=3$ & $v_5^*v_5$ \\\hline
\rule{0ex}{2.5ex}$B^jA^6B^{6-j}\quad(1\le j\le 6)$ & $j=3$ & $u_5^*u_5$ \\\hline
\end{tabular}
\end{table}

\vskip2ex

\begin{table}[ht]
\caption{Forms of $w_7=A^4B^2A^2B^4$ and factorizations as in~\eqref{eq:wuv}.}
\label{tab:w7}
\begin{tabular}{l||l|l}
cyclically equivalent form & $j$ value & factorization \\\hline\hline
\rule{0ex}{2.5ex}$A^jB^2A^2B^4A^{4-j}\quad(1\le j\le 4)$ & $j=1$ & $v_6^*v_5$ \\\hline
\rule{0ex}{2.5ex}$B^jA^2B^4A^4B^{2-j}\quad(1\le j\le 2)$ & \multicolumn{2}{c}{none} \\\hline
\rule{0ex}{2.5ex}$A^jB^4A^4B^2A^{2-j}\quad(1\le j\le 2)$ & \multicolumn{2}{c}{none} \\\hline
\rule{0ex}{2.5ex}$B^jA^4B^2A^2B^{4-j}\quad(1\le j\le 4)$ & $j=3$ & $u_5^*u_6$ \\\hline
\end{tabular}
\end{table}

\vskip2ex

\begin{table}[ht]
\caption{Forms of $w_8=A^2B^2A^2B^2A^2B^2$ and factorizations as in~\eqref{eq:wuv}.}
\label{tab:w8}
\begin{tabular}{l||l|l}
cyclically equivalent form & $j$ value & factorization \\\hline\hline
\rule{0ex}{2.5ex}$A^jB^2A^2B^2A^2B^2A^{2-j}\quad(1\le j\le 2)$ & $j=1$ & $v_6^*v_6$ \\\hline
\rule{0ex}{2.5ex}$B^jA^2B^2A^2B^2A^2B^{2-j}\quad(1\le j\le 2)$ & $j=1$ & $u_6^*u_6$ \\\hline
\end{tabular}
\end{table}

Again, $w_6$, $w_7$ and $w_8$ have no factorizations as in~\eqref{eq:wXuv} or~\eqref{eq:wYuv}.
From Table~\ref{tab:w6}, we see that $w_6$ has $12$ distinct cyclically equivalent forms,
and since $H(u_5,u_5)=H(v_5,v_5)$, from~\eqref{eq:Hsums} we get
$H(u_5,u_5)=6$.
From Table~\ref{tab:w7} and $H(v_6,v_5)=H(u_6,u_5)=H(u_5,u_6)$, we get $H(u_5,u_6)=6$,
while from Table~\ref{tab:w8} we see that $w_8$ has only four distinct cyclically equivalent forms,
and we get $H(u_6,u_6)=2$.
The $2\times 2$ submatrix of $H$ corresponding to $\{u_5,u_6\}$ is, therefore,
\[
\begin{pmatrix}
H(u_5,u_5)&H(u_5,u_6) \\
H(u_6,u_5)&H(u_6,u_6)
\end{pmatrix}
=
\begin{pmatrix}
6&6\\
6&2
\end{pmatrix},
\]
which is not positive semidefinite.
This gives a contradiction.
\end{proof}

\section{Sums of squares in $\Reals\langle A,B\rangle$}
\label{sec:Sm4}

In this section, we prove some results related to Question~\ref{ques:H}.
As per the discussion in the introduction (see Proposition~2.3 of~\cite{KS08}),
we say $f,g\in\Reals\langle A,B\rangle$ are cyclically equivalent
if and only if $f-g$ is a sum of commutators of elements from $\Reals\langle A,B\rangle$.
This holds if and only if, for every word $w$ in $A$ and $B$, the sum over words $v$ that are cyclic permutations of $w$
of the coeefficients in $f$ of $v$ agrees with the same sum for $g$.

Clearly, if $S_{m,k}(A,B)$ is cyclically equivalent to a sum $\sum_i f_i^*f_i$
of Hermitian squares, for $f_i\in\Reals\langle A,B\rangle$, then Question~\ref{ques:H} has a positive answer for
this particular pair $(m,k)$.

Of course, $S_{2m,0}(A,B)=A^{2m}$ is a Hermitian square in $\Reals\langle A,B\rangle$, for every integer $m\ge0$.

Verification of the following two lemmas is straightforward.

\begin{lemma}\label{lem:S4m,2}
Let $m\in\Nats$.
Then
\[
S_{4m,2}(A,B)\cycsim mf_m^*f_m+ 2m\sum_{j=0}^{m-1}f_j^*f_j,
\]
where
\begin{align*}
f_0&=BA^{2m-1} \\
f_j&=A^{j-1}BA^{2m-j}+A^jBA^{2m-j-1},\qquad(1\le j\le m).
\end{align*}
\end{lemma}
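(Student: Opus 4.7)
The plan is to verify the claimed cyclic equivalence using the coefficient-sum criterion of Proposition~2.3 of~\cite{KS08}: two elements of $\Reals\langle A,B\rangle$ are cyclically equivalent if and only if, for every cyclic equivalence class $[w]$ of words, the sums of their coefficients over $v\in[w]$ agree. Since $S_{4m,2}(A,B)$ has coefficient $1$ on every word, I only need to check that for each class $C$ the coefficient sum of the right-hand side over $C$ equals $|C|$.

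First I would enumerate the cyclic classes in $W_{4m-2,2}(A,B)$. Every such word has the form $A^aBA^bBA^c$ with $a+b+c=4m-2$, and its cyclic orbit is determined by the unordered pair $\{b,\,4m-2-b\}$ of gaps between the two $B$'s on the cyclic arrangement. Indexing by $p\in\{0,1,\dots,2m-1\}$ with gap pair $\{p,4m-2-p\}$, the class $C_p$ has size $4m$ for $p\ne 2m-1$, while $|C_{2m-1}|=2m$ due to the $180^\circ$ rotational symmetry that arises when both gaps equal $2m-1$.

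Next I would expand each $f_j^*f_j$ and read off which classes its summands belong to. For $j=0$ one gets $f_0^*f_0=A^{2m-1}B^2A^{2m-1}\in C_0$. For $1\le j\le m$, a direct multiplication yields the four monomials
\[
A^{2m-j}BA^{2j-2}BA^{2m-j},\ A^{2m-j}BA^{2j-1}BA^{2m-j-1},\ A^{2m-j-1}BA^{2j-1}BA^{2m-j},\ A^{2m-j-1}BA^{2j}BA^{2m-j-1},
\]
whose middle $B$-gaps are $2j-2$, $2j-1$, $2j-1$, $2j$, placing them in $C_{2j-2}$, $C_{2j-1}$, $C_{2j-1}$, $C_{2j}$ respectively, with the understanding that for $j=m$ the gap $2j=2m$ corresponds to $p=4m-2-2m=2m-2$.

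Finally I would tally class by class and check the totals equal $|C_p|$. For an interior odd $p=2j-1$ with $1\le j\le m-1$, only $f_j^*f_j$ contributes, giving $2m\cdot 2=4m$. For an interior even $p=2j$ with $1\le j\le m-2$, one summand comes from each of $f_j^*f_j$ and $f_{j+1}^*f_{j+1}$, giving $2m+2m=4m$. The class $C_0$ receives $2m$ from $f_0^*f_0$ and $2m$ from the $2j-2=0$ summand of $f_1^*f_1$, totalling $4m$. The class $C_{2m-2}$ receives $2m$ from the $2j=2m-2$ summand of $f_{m-1}^*f_{m-1}$ together with $m\cdot 2=2m$ from the two $2j-2=2m-2$ and $2j=2m$ summands of $f_m^*f_m$, again $4m$. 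Finally $C_{2m-1}$ receives only the two middle summands of $f_m^*f_m$, for $m\cdot 2=2m=|C_{2m-1}|$. Since all balances hold, the stated cyclic equivalence follows. The only real subtlety, and the main point to double-check, is the boundary behaviour at $p=2m-2$ and $p=2m-1$: the halved coefficient $m$ on $f_m^*f_m$ is precisely what is needed to match the halved class size at $p=2m-1$ while also leaving the correct total at $p=2m-2$, so that the boundary terms dovetail with the uniform interior count.
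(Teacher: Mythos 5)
Your verification is correct and is exactly the kind of coefficient count the paper has in mind when it declares the lemma ``straightforward'' and omits the proof: classify words $A^aBA^bBA^c$ by the unordered gap pair $\{b,4m-2-b\}$, note the class sizes $4m$ (resp.\ $2m$ for the symmetric class $p=2m-1$), and check that the expanded squares $f_j^*f_j$ with weights $2m$ and $m$ reproduce these totals, the weight $m$ on $f_m^*f_m$ matching the halved symmetric class. The only cosmetic caveat is that your itemized tally tacitly assumes $m\ge2$ (so that $C_0\ne C_{2m-2}$ and $f_1\ne f_m$); the degenerate case $m=1$ is checked directly in one line and causes no problem.
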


\begin{lemma}\label{lem:S4m+2,2}
Let $m\in\Nats$.
Then
\[
S_{4m+2,2}(A,B)\cycsim(2m+1)\sum_{j=0}^mf_j^*f_j,
\]
where
\begin{align*}
f_0&=BA^{2m} \\
f_j&=A^{j-1}BA^{2m-j+1}+A^jBA^{2m-j},\qquad(1\le j\le m).
\end{align*}
\end{lemma}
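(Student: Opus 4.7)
My plan is to invoke the characterization of cyclic equivalence given in Proposition~2.3 of~\cite{KS08}: two polynomials in $\Reals\langle A,B\rangle$ are cyclically equivalent if and only if, for every cyclic equivalence class of words, the sums of the coefficients over that class agree. Since $S_{4m+2,2}(A,B)$ has coefficient $1$ on each word of length $4m+2$ containing exactly two $B$'s, the sum over a cyclic class equals the size of that class.

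A word of length $4m+2$ with exactly two $B$'s is determined up to cyclic rotation by the unordered pair of ``gaps'' $\{p,q\}$ between its two $B$'s, where $p+q=4m$. The associated cyclic class has $4m+2$ elements when $p\neq q$ and $2m+1$ elements when $p=q=2m$. Thus my target is to check that for every such $\{p,q\}$, the coefficient sum in $(2m+1)\sum_{j=0}^m f_j^*f_j$ over that class equals $4m+2$ (resp. $2m+1$).

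The main step is to expand each $f_j^*f_j$ and record the gap-class of each resulting word. For $j=0$, $f_0^*f_0=A^{2m}B^2A^{2m}$ lies in the class $\{0,4m\}$. For $1\le j\le m$, expanding the product gives four words, namely
\[
A^{2m-j+1}BA^{2j-2}BA^{2m-j+1},\ A^{2m-j+1}BA^{2j-1}BA^{2m-j},\ A^{2m-j}BA^{2j-1}BA^{2m-j+1},\ A^{2m-j}BA^{2j}BA^{2m-j},
\]
which fall in the gap-classes $\{2j-2,4m-2j+2\}$, $\{2j-1,4m-2j+1\}$, $\{2j-1,4m-2j+1\}$, $\{2j,4m-2j\}$, respectively. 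I would then organize the class-by-class tally: each odd class $\{2j-1,4m-2j+1\}$, $1\le j\le m$, picks up a contribution of $2$ from the middle two words of $f_j^*f_j$ alone; each even class $\{2j,4m-2j\}$ with $0\le j\le m-1$ receives contribution $1$ from the last word of $f_j^*f_j$ (or from $f_0^*f_0$ when $j=0$) plus contribution $1$ from the first word of $f_{j+1}^*f_{j+1}$, totaling $2$; and the exceptional class $\{2m,2m\}$ receives only the single contribution $1$ from the last word of $f_m^*f_m$.

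Multiplying through by $(2m+1)$ yields $2(2m+1)=4m+2$ for every class with $p\neq q$ and $2m+1$ for the class $\{2m,2m\}$, matching the class sizes. The only obstacle is bookkeeping: one must be sure no cyclic class has been missed and that the telescoping between adjacent $f_j^*f_j$ and $f_{j+1}^*f_{j+1}$ covers precisely the even classes $\{0,4m\},\{2,4m-2\},\ldots,\{2m-2,2m+2\}$ while leaving $\{2m,2m\}$ with a single contribution, reflecting its smaller cardinality.
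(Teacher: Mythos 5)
Your proof is correct, and it supplies exactly the "straightforward verification" that the paper explicitly omits (the paper states only that "Verification of the following two lemmas is straightforward" and gives no argument). Your reduction to cyclic classes indexed by unordered gap pairs $\{p,q\}$ with $p+q=4m$, together with the observation that the class has $4m+2$ elements when $p\neq q$ and $2m+1$ elements when $p=q=2m$, is the natural bookkeeping device; the expansion of each $f_j^*f_j$ into four monomials and the resulting telescoping tally (each odd class receives $2$ from the cross terms of a single $f_j^*f_j$, each even class $\{2j,4m-2j\}$ with $j<m$ receives $1+1$ from consecutive $f_j^*f_j$ and $f_{j+1}^*f_{j+1}$, and the symmetric class $\{2m,2m\}$ receives the lone contribution from $f_m^*f_m$) is complete and matches the class sizes after multiplying by $2m+1$. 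No gap remains.
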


The next proposition shows that $S_{2q,4}(A,B)$ is cyclically equivalent
to a sum of Hermitian squares in $\Reals\langle A,B\rangle$, when $q$ is odd.
Note that Klep and Schweighofer in Section~5 of~\cite{KS08} proved this in the case $q=7$.
In fact, we found the expression~\eqref{eq:S4m+2,4} below by exploration using Mathematica~\cite{M7.0}
and checked it by computation for all values of $m\le20$.
The best proof we could find, which is given below, turned out to be surprisingly intricate.

\begin{prop}\label{prop:S4m+2,4}
Let $m\in\Nats$.
Then
\begin{equation}\label{eq:S4m+2,4}
S_{4m+2,4}(A,B)\cycsim(2m+1)\sum_{p=0}^m f_p^*f_p,
\end{equation}
where
\begin{eqnarray*}
f_0&=&\sum_{s=0}^{2m-1}BA^{2m-s-1}BA^s, \\
f_p&=&\sum_{i=p-1}^p \sum_{s=p}^{2m-i-1} A^iBA^{2m-s-i-1}BA^s, \qquad (1\leq p \leq m-1) \\
f_m&=&A^{m-1}B^2A^m.
\end{eqnarray*}
\end{prop}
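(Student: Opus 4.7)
\medskip
\noindent\textbf{Proof plan.}
By Proposition~2.3 of~\cite{KS08} (applied to $\Reals\langle A,B\rangle$, as recalled at the start of this section), cyclic equivalence is equivalent to the equality of coefficient sums over each cyclic equivalence class of words. So, fixing a cyclic class $[w]$ of words in $W_{4m-2,4}(A,B)$, it suffices to show that the coefficient sum of $(2m+1)\sum_{p=0}^m f_p^*f_p$ over $[w]$ equals $|[w]|$, the corresponding coefficient sum for $S_{4m+2,4}(A,B)$.

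A key simplification: for each specific word $w$ of length $4m+2$ with four $B$'s, its coefficient in $\sum_p f_p^*f_p$ equals the number of $p$'s for which both the reversal of the first $2m+1$ letters of $w$ and the last $2m+1$ letters of $w$ are terms of $f_p$; this is because the factorization of a length-$(4m+2)$ word as $u^*v$ with $u$, $v$ of length $2m+1$ is unique. Summing over $w\in[w]$, the identity to prove reduces to
\[
C([w]) := \bigl|\bigl\{(p,u,v)\colon u,v\in f_p,\ u^*v\in[w]\bigr\}\bigr| = \frac{|[w]|}{2m+1}
\]
for every cyclic class $[w]$.

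Since $|[w]|$ must divide $4m+2$ and any period of $w$ must contain an integer number of the four $B$'s, the only possibilities are the generic case $|[w]|=4m+2$ and the periodic case, where $w=vv$ for some $v\in W_{2m-1,2}(A,B)$ and $|[w]|=2m+1$. So one needs to prove $C([w])=2$ in the generic case and $C([w])=1$ in the periodic case.

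The periodic case admits a clean analysis: when $w=vv$, every rotation of $w$ has identical first and second halves, so both halves lying in $f_p$ forces the common half $h$ to satisfy both $h\in f_p$ and $h^*\in f_p$. Parametrizing $h\in W_{2m-1,2}$ by $(\alpha,\beta)$, where $\alpha$ is the number of $A$'s before the first $B$ and $\beta$ is the number of $A$'s after the second $B$, a direct inspection of the definition of $f_p$ forces $\alpha=\beta=p$ for some $p\in\{0,\ldots,m-1\}$, i.e., $h=A^pBA^{2m-1-2p}BA^p$. Analyzing $\alpha-\beta$ as a piecewise linear function of the rotation parameter shows that exactly one rotation of $v$ has this palindromic form, yielding $C([vv])=1$. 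The generic case is the main obstacle: here one enumerates the rotations $w'$ of $w$ whose central split at position $2m+1$ yields a $2$-$2$ distribution of $B$'s in the two halves (these correspond to diametrically opposite pairs of cut points on the cyclic word, landing in opposite cyclic gaps), and for each such $w'$ checks when both halves lie in a common $f_p$. The required total $C([w])=2$ emerges from a detailed case analysis on the gap pattern $(a_1,a_2,a_3,a_4)$ of $[w]$, exploiting in an essential way the narrow index ranges $i\in\{p-1,p\}$ and $s\in[p,2m-i-1]$ defining $f_p$.
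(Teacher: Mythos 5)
Your reduction is sound and matches the paper's: by Proposition~2.3 of \cite{KS08} it suffices to show that each cyclic class receives total coefficient $|[w]|/(2m+1)$, i.e.\ $2$ for aperiodic classes and $1$ for the periodic classes $[vv]$ (your divisibility argument for why these are the only two class sizes is correct, and corresponds to the paper's type~I/type~II dichotomy). Your treatment of the periodic case is also correct and is essentially a cleaner repackaging of the paper's observation that the only type~II contributions come from the indices $\iota_p=(p,l-2p,2p,l-2p,p)$: the uniqueness of the split at position $2m+1$ forces both halves to equal a palindromic word $A^pBA^{2m-1-2p}BA^p$, and the parity of the two gaps shows exactly one rotation of $v$ has this form.

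The genuine gap is the aperiodic case, which you acknowledge is ``the main obstacle'' but then dispose of in one sentence: ``the required total $C([w])=2$ emerges from a detailed case analysis on the gap pattern $(a_1,a_2,a_3,a_4)$.'' That case analysis is the entire content of the proposition --- the authors themselves remark that the proof ``turned out to be surprisingly intricate.'' In the paper it occupies several pages: one must introduce the canonical-form map $O$ on the index sets $I_p(i,j)$, partition the non-palindromic indices $K$ into two sets $B$ and $C$ together with a bijection $\beta:B\to C$ satisfying $O\circ\beta=O$ (this is done in six separate pieces, cases (i)--(vi), with explicit formulas depending on parities of $p+t$, $l-s$, $l-t$), verify that $O$ restricted to $B$ is injective with type~I values, and then match $\#B=\tfrac{2m(2m-1)(2m+1)}{3}$ against the count of type~I classes (Lemma~\ref{cardoftypes}) to conclude each aperiodic class is hit exactly twice. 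Nothing in your proposal substitutes for this: you have not exhibited, for a general gap pattern, which rotations admit a $2$--$2$ split with both halves in a common $f_p$, nor shown that the narrow ranges $i\in\{p-1,p\}$, $s\in[p,2m-i-1]$ always produce exactly two such triples and never one or three. Until that enumeration (or an equivalent bijective/counting argument) is actually carried out, the proof is incomplete at its central step.
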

As before $W_{q,4}(A,B)$ denotes the set of all words in $A$ and $B$ with exactly $q$ $A$'s and four $B$'s.
Let $\Nats_0=\Nats\cup\{0\}$. For  $\iota=(\iota_1,\iota_2,\iota_3,\iota_4.\iota_5) \in \Nats_0^5$  let 
$$
E(\iota)=A^{\iota_1}BA^{\iota_2}BA^{\iota_3}BA^{\iota_4}BA^{\iota_5}
$$
and take
$$
I=\{\iota \in \Nats_0^5 \mid \iota_1+\iota_2+\iota_3+\iota_4+\iota_5=4m-2 \}.
$$
 
Note that the map $\iota \mapsto E(\iota)$ gives a bijection from I onto $W_{4m-2,4}(A,B)$.
With this notation we may write
$$
S_{4m+2,4}(A,B)=\sum_{\iota \in I} E(\iota).
$$

The proof of Proposition~\ref{prop:S4m+2,4} will use the following three lemmas.
The first of these is readily verified, and a proof will be omitted.

\begin{lemma}\label{canonicalorder}
Each word in $W_{4m-2,4}(A,B)$ is cyclically equivalent to a unique word of the form
$$
BA^{k_1}BA^{k_2}BA^{k_3}BA^{k_4}
$$
where $\kappa=(0,k_1,k_2,k_3,k_4)\in I$ satisfies  either

\begin{alignat}{3}
 k_1&\leq k_3&\quad&\textrm{and}\quad&k_2&<k_4 \label{typeI} \\
 &&&\textrm{or}\nonumber \\
 k_1&=k_3&&\leq&k_2&=k_4. \label{typeII}
\end{alignat}
\end{lemma}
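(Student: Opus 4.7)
The plan is to translate the claim into a combinatorial statement about $\mathbb{Z}/4$-orbits of integer $4$-tuples and then verify existence and uniqueness of the canonical representative orbit by orbit.

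First I would observe that since every $w\in W_{4m-2,4}(A,B)$ contains at least one $B$, Proposition~2.3 of~\cite{KS08} lets us cyclically permute $w$ so that it begins with a $B$; then $w\cycsim BA^{k_1}BA^{k_2}BA^{k_3}BA^{k_4}$ for some $(k_1,k_2,k_3,k_4)\in\Nats_0^4$ with $k_1+k_2+k_3+k_4=4m-2$. Moreover, two words of this form are cyclically equivalent precisely when the tuples differ by a cyclic rotation of $\Nats_0^4$, since bringing any one of the four $B$'s to the front simply rotates the tuple. So the lemma reduces to: under the cyclic shift action of $\mathbb{Z}/4$ on $T_m:=\{(k_1,k_2,k_3,k_4)\in\Nats_0^4:\sum k_i=4m-2\}$, every orbit contains exactly one tuple satisfying \eqref{typeI} or \eqref{typeII}.

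Next I would stratify by orbit size. A tuple with full stabilizer (all $k_i$ equal) would need $4\mid 4m-2$, which is false, so no orbit has size $1$. An orbit has size $2$ iff $(k_1,k_2,k_3,k_4)=(k_3,k_4,k_1,k_2)$, i.e.\ $k_1=k_3$ and $k_2=k_4$; all other orbits have size $4$. For a size-$4$ orbit, condition \eqref{typeII} cannot hold (it would force $k_1=k_3$, $k_2=k_4$ and hence orbit size $\le 2$), so I need to check that exactly one of the four rotations satisfies \eqref{typeI}. For a size-$2$ orbit, condition \eqref{typeI} cannot hold (it requires $k_2<k_4=k_2$), so I need to check that exactly one of the two rotations satisfies \eqref{typeII}; indeed the two representatives are $(k_1,k_2,k_1,k_2)$ and $(k_2,k_1,k_2,k_1)$, and since $k_1\ne k_2$ (else the orbit would have size $1$), exactly one of these has its first entry $\le$ its second entry.

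The remaining piece is the size-$4$ case, which I expect to be the only mildly annoying step, and which is pure bookkeeping. Writing the rotations as $r_j=(k_{1+j},k_{2+j},k_{3+j},k_{4+j})$ (indices mod $4$), condition \eqref{typeI} applied to $r_j$ is: first entry $\le$ third entry, and second entry $<$ fourth. Splitting on $k_1<k_3$, $k_1>k_3$, or $k_1=k_3$ (in the last case, size-$4$ forces $k_2\ne k_4$) and in each case splitting on the order of $k_2$ versus $k_4$, one checks directly that exactly one of $r_0,r_1,r_2,r_3$ passes both tests: for instance, if $k_1<k_3$ and $k_2<k_4$ only $r_0$ works; if $k_1<k_3$ and $k_2\ge k_4$ only $r_3$ works (using $k_1<k_3$ for the strict inequality in the second entry comparison); and the cases $k_1>k_3$ mirror these by the rotation $r_0\leftrightarrow r_2$. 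The hard part, to the extent there is one, is just being careful about strict versus non-strict inequalities so that the cases $k_1=k_3$, $k_2\ne k_4$ land in \eqref{typeI} rather than straddling the two conditions.
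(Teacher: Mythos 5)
Your proof is correct. The paper actually omits the proof entirely, stating only that the lemma ``is readily verified, and a proof will be omitted,'' so there is no written argument to compare against; your orbit-by-orbit verification supplies exactly the missing details. The reduction to $\mathbb{Z}/4$-rotations of $(k_1,k_2,k_3,k_4)$ is the natural translation (a cyclic permutation of $BA^{k_1}BA^{k_2}BA^{k_3}BA^{k_4}$ begins with $B$ iff it is one of the four rotations of the exponent tuple), the observation that $4\nmid 4m-2$ rules out fixed points, and the split into size-$2$ orbits (handled by \eqref{typeII}) versus size-$4$ orbits (handled by \eqref{typeI}) is the right stratification. I checked your size-$4$ case analysis: with $r_j$ denoting the $j$-th rotation, condition \eqref{typeI} for $r_0,r_1,r_2,r_3$ reads respectively $(k_1\le k_3,\,k_2<k_4)$, $(k_2\le k_4,\,k_3<k_1)$, $(k_3\le k_1,\,k_4<k_2)$, $(k_4\le k_2,\,k_1<k_3)$, and in each of the six subcases ($k_1<k_3$, $k_1>k_3$, or $k_1=k_3$ with $k_2\ne k_4$, crossed with the order of $k_2$ vs.\ $k_4$) exactly one of these holds, as you assert. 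The only place requiring the care you flagged is precisely the boundary $k_1=k_3$, where the nonstrict inequality in the first slot of \eqref{typeI} lets $r_0$ or $r_2$ (but not both) absorb it, and the strict inequality in the second slot prevents double-counting against \eqref{typeII}.
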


We will call the words (or indices) described in (\ref{typeI}) and (\ref{typeII}) {\em canonically ordered} and  those of the form (\ref{typeI}) will be called {\em type I} while those given by  (\ref{typeII}) will be called {\em type II}. 
Since the first letter of any canonically ordered word is a $B$, canonically ordered words are parameterized by only four non-negative integers, and we'll frequently omit to write the first element of a canonically ordered index $\kappa$,
since it is always zero.

\begin{lemma}\label{cardoftypes}
\begin{eqnarray*}
\# \{ \kappa \in I \mid\kappa  \textrm{ is canonically ordered of type I}  \} &=& \frac{2m(2m-1)(2m+1)} {3} . \\
\# \{ \kappa \in I \mid \kappa \textrm{ is canonically ordered of type II}  \}&=& m.
\end{eqnarray*}
\end{lemma}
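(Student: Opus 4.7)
The approach is to count the type II indices directly, then obtain the type I count by subtracting type II from a total computed via Burnside's lemma on a cyclic group action. The link between the two counts is Lemma~\ref{canonicalorder}, which puts canonically ordered tuples (types I and II combined) in bijection with cyclic equivalence classes of words in $W_{4m-2,4}(A,B)$.

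For type II, the conditions $k_1 = k_3 \le k_2 = k_4$ together with $k_1+k_2+k_3+k_4 = 4m-2$ reduce to $k_1 + k_2 = 2m-1$ with $k_1 \le k_2$; since $2m-1$ is odd, the inequality is automatically strict, so $k_1$ ranges over $\{0,1,\dots,m-1\}$ and yields exactly $m$ solutions.

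For the total, let $\tilde I = \{(k_1,k_2,k_3,k_4) \in \Nats_0^4 : k_1+k_2+k_3+k_4 = 4m-2\}$, which parametrizes the words in $W_{4m-2,4}(A,B)$ of the form $BA^{k_1}BA^{k_2}BA^{k_3}BA^{k_4}$. The cyclic group $\mathbf{Z}/4\mathbf{Z}$ acts on $\tilde I$ by $(k_1,k_2,k_3,k_4) \mapsto (k_2,k_3,k_4,k_1)$, and two elements of $\tilde I$ lie in the same orbit precisely when the corresponding words are cyclically equivalent: every cyclic class of words contains representatives starting with $B$, and the ``starts-with-$B$'' representatives in a given class are related by these 4-tuple rotations. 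Burnside's lemma then yields the orbit count, since the identity fixes all $\binom{4m+1}{3}$ tuples, the shift by two fixes the $2m$ tuples with $k_1 = k_3$ and $k_2 = k_4$ (equivalently $k_1+k_2 = 2m-1$), and the shifts by one and three fix only constant tuples $(a,a,a,a)$, which would require $4a = 4m-2$ and is impossible. The orbit count is thus $\tfrac14\bigl(\binom{4m+1}{3} + 2m\bigr)$; subtracting the $m$ type II tuples and simplifying produces $\tfrac{2m(2m-1)(2m+1)}{3}$ for type I.

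The main point requiring care is verifying this orbit correspondence: that $\mathbf{Z}/4\mathbf{Z}$-orbits on $\tilde I$ match cyclic equivalence classes of words in $W_{4m-2,4}(A,B)$. This is essentially tautological once one observes that, among the $4m+2$ cyclic shifts of a given word in $W_{4m-2,4}(A,B)$, the four shifts that rotate a $B$ into position zero realize exactly the four cyclic rotations of the corresponding 4-tuple. The remaining steps are elementary arithmetic.
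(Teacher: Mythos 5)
Your proof is correct, but it takes a genuinely different route from the paper's. The paper proves each count directly by exhibiting explicit bijections: type~I tuples are put in bijection with tuples $(a,b,a_1,a_2,b_1,b_2)$ where $a+b=4m+1$ and $(a_1,a_2)$, $(b_1,b_2)$ are two-part partitions of $a$, $b$ respectively, giving a sum $\sum_{a+b=4m+1}\lfloor a/2\rfloor\lfloor b/2\rfloor$; type~II tuples are put in bijection with two-part partitions of $2m+1$. You instead count type~II directly (identically in spirit to the paper), then count the \emph{total} number of cyclic equivalence classes via Burnside's lemma on the $\mathbf{Z}/4\mathbf{Z}$ rotation action on 4-tuples, and subtract. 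The arithmetic checks out: $\tfrac14\bigl(\binom{4m+1}{3}+2m\bigr)=\tfrac{m(8m^2+1)}{3}$, and subtracting $m$ gives $\tfrac{2m(2m-1)(2m+1)}{3}$. Your orbit--class correspondence is correctly justified: the four cyclic shifts of a word in $W_{4m-2,4}(A,B)$ that begin with $B$ are exactly the four cyclic rotations of its 4-tuple, so $\mathbf{Z}/4\mathbf{Z}$-orbits on 4-tuples match cyclic equivalence classes of words, and Lemma~\ref{canonicalorder} identifies those classes with canonically ordered indices (types I and II combined). What your approach buys is that it avoids constructing and verifying an ad hoc bijection to partitions, replacing it with a standard orbit-counting argument that makes the appearance of the number of cyclic equivalence classes conceptually transparent; what the paper's approach buys is a direct, subtraction-free derivation of each count separately, and explicit bijections that may be reusable elsewhere.
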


\begin{proof}
We recall that a partition of $n \in \Nats$ into $k$ parts is a $k$-tuple $(a_1,a_2,\cdots,a_k)$ such that 
$1\leq a_1 \leq a_2 \leq \cdots \leq a_k$ and $a_1+a_2+\cdots + a_k=n$. We denote it as $(a_1,a_2,\cdots,a_k) \vdash n$.

Consider the sets 
$$
B=\{ (a,b,a_1,a_2,b_1,b_2) \in \Nats^6 \mid a+b=4m+1, (a_1,a_2) \vdash a , (b_1,b_2) \vdash b \}
$$
and

$$
A=\{ \kappa \in I \mid\kappa  \textrm{ is canonically ordered of type I}  \}.
$$

Take the function from $A$ into $B$ given by
$$
(k_1,k_2,k_3,k_4) \mapsto (k_1+k_3+2,k_2+k_4+1,k_1+1,k_3+1,k_2+1,k_4).
$$

One can show this function is a bijection onto $B$. Thus,
\begin{eqnarray*}
\# A = \sum_{ \begin{subarray}{2} (a,b)\in \Nats^2 \\ a+b=4m+1 \end{subarray}}
       \bigg\lfloor \frac{a}{2}  \bigg\rfloor \bigg\lfloor \frac{b}{2}  \bigg\rfloor 
 =  \frac{2}{3}m(2m-1)(2m+1).
\end{eqnarray*}

Similarly, the function
$$
(k_1,k_2,k_3,k_4) \mapsto (k_1+1,k_2+1)
$$
is a bijection from $\{  \kappa \in I \mid \kappa \textrm{ is canonically ordered of type II}   \}$ onto the set
$\{ (a,b) \in \Nats^2\mid (a,b) \vdash 2m+1 \}$.
Hence
$$
\# \{ \kappa \in I \mid \kappa \textrm{ is canonically ordered of type II}  \}= \left\lfloor \frac{2m+1}{2} \right\rfloor= m.
$$
\end{proof}

The following lemma is easily verified by writing out the cyclically equivalent forms of words;
see Tables~\ref{tab:w1}--\ref{tab:w8} for other exercises of this sort.
\begin{lemma}\label{sumcoeffS}
Let  $w \in W_{4m-2,4}(A,B)$ be a canonically ordered word.
If $w$ is of type I,
then there are $4m+2$ words in $W_{4m-2,4}(A,B)$ that are cyclically equivalent to $w$,
while if $w$ is of type II,
then there are $2m+1$ words in $W_{4m-2,4}(A,B)$ that are cyclically equivalent to $w$.
\end{lemma}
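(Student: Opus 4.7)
The plan is to reduce the count to an orbit–stabilizer calculation for the action of cyclic rotations. By Proposition~2.3 of~\cite{KS08}, two words of $W_{4m-2,4}(A,B)$ are cyclically equivalent if and only if one is a cyclic rotation of the other, and rotation preserves letter counts, so the cyclic group of order $4m+2$ (the common length of these words) acts on $W_{4m-2,4}(A,B)$ with orbits equal to cyclic equivalence classes. Thus the number of words equivalent to $w$ equals $(4m+2)/|\mathrm{Stab}(w)|$, and it suffices to determine $|\mathrm{Stab}(w)|$ for each canonically ordered $w$.

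Next, I would classify the possible stabilizers. A rotation by $d$ positions with $0<d<4m+2$ fixes $w$ exactly when $w = v^{(4m+2)/d}$ for $v$ the length-$d$ prefix; this forces $d\mid 4m+2$, and the four $B$'s must distribute evenly across the $(4m+2)/d$ periods, so $(4m+2)/d$ must divide $4$. Since $4\nmid 4m+2$, the only admissible value is $(4m+2)/d=2$, i.e.\ $d=2m+1$, with each period containing exactly two $B$'s. Hence $|\mathrm{Stab}(w)|\in\{1,2\}$, with the larger value occurring precisely when $w=vv$ for some $v$ of length $2m+1$ having exactly two $B$'s.

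Finally, I would identify which canonically ordered words have this self-repeating form. Writing $v=BA^aBA^b$ and comparing with $w=BA^{k_1}BA^{k_2}BA^{k_3}BA^{k_4}$ yields $(k_1,k_2,k_3,k_4)=(a,b,a,b)$, i.e.\ $k_1=k_3$ and $k_2=k_4$; by Lemma~\ref{canonicalorder} these are exactly the type~II words (condition~(\ref{typeII})), whereas type~I words are excluded by the strict inequality $k_2<k_4$. Therefore type~I words have trivial stabilizer and orbit size $4m+2$, while type~II words have stabilizer of order $2$ and orbit size $2m+1$, which is the claim. The whole argument is essentially bookkeeping once the orbit–stabilizer framework is set up; the only step requiring any care is the divisibility observation that rules out every non-trivial rotation other than by $2m+1$.
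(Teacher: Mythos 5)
Your argument is correct, and it takes a somewhat different route from the paper, which does not write out a proof at all: the paper simply notes that the lemma ``is easily verified by writing out the cyclically equivalent forms of words'', i.e.\ by direct enumeration of the cyclic shifts of $BA^{k_1}BA^{k_2}BA^{k_3}BA^{k_4}$ in the style of Tables~\ref{tab:w1}--\ref{tab:w8}. You instead package the count as an orbit--stabilizer computation: by Proposition~2.3 of~\cite{KS08} cyclic equivalence classes of words are exactly the orbits of the rotation action of the cyclic group of order $4m+2$, so the class size is $(4m+2)/|\mathrm{Stab}(w)|$, and a nontrivial stabilizer forces $w=v^e$ with $e>1$ dividing both $4m+2$ and the number of $B$'s; since $4\nmid 4m+2$ this gives $e=2$, and for a canonically ordered word $w=vv$ means precisely $k_1=k_3$, $k_2=k_4$, which by Lemma~\ref{canonicalorder} is exactly type~II. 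One small imprecision: rotation by $d$ fixes $w$ if and only if rotation by $\gcd(d,4m+2)$ does, so your assertion that a fixing rotation ``forces $d\mid 4m+2$'' is not literally true for an arbitrary such $d$; but because the stabilizer is a subgroup of a cyclic group of order $4m+2$, a nontrivial stabilizer contains the rotation by $(4m+2)/p$ for some prime $p$ dividing $4m+2$, and the rest of your divisibility argument then applies verbatim, so the conclusion $|\mathrm{Stab}(w)|\in\{1,2\}$ with the value $2$ exactly in type~II is unaffected. What your approach buys is a uniform, case-free count (and it generalizes readily to other letter multiplicities); what the paper's implicit enumeration buys is only that it matches the hands-on bookkeeping used elsewhere in Section~\ref{sec:nonsos}.
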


\begin{proof}[Proof of Proposition~\ref{prop:S4m+2,4}]
Let $l=2m-1$.

For $g\in\Reals\langle A,B\rangle$ and $w$ a word in $A$ and $B$, we let $c_w(g)$ denote the coefficient of $w$ in $g$.
By Lemmas~\ref{canonicalorder} and~\ref{sumcoeffS} it will suffice to show, for every canonically ordered word
$w\in W_{4m-2,4}(A,B)$,
\begin{equation}\label{eq:cw}
\sum_{\{v\mid v \cycsim w\}} \sum_{p=0}^m  c_{v}(f_p^*f_p) =
\begin{cases}
2,&w\textrm{ of type I} \\
1,&w\textrm{ of type II}
\end{cases}
\end{equation}
{\em i.e.}, for each such $w$, there is only one representative in $\sum_{p=0}^m f_p^*f_p$ if $w$ is type II
and exactly two representatives if $w$ is type I.

We begin by taking a closer look at each $f_p^*f_p$.
We have 
\[
f_0^*f_0 = \sum_{0\leq s , t \leq l} A^sBA^{l-s}B^2A^{l-t}BA^t=\sum_{\iota \in I_0} E(\iota),
\]
where
\[
I_0=\{ \iota=(s,l-s,0,l-t,t)\mid 0\leq s , t \leq l \}
\]
and
for $1\leq p \leq m-1$,
\begin{eqnarray*}
f_p^*f_p &=&\sum_{p-1\leq i , j \leq p} \quad \sum_{ \begin{subarray}{l}  p\leq s \leq l-i \\  p\leq t \leq l-j \end{subarray}   } A^sBA^{l-i-s}BA^{i+j}BA^{l-j-t}BA^t \\ 
&=& \sum_{\iota \in I_p(p-1,p-1) } E(\iota ) + \sum_{\iota \in I_p(p-1,p) } E(\iota )+ \sum_{\iota \in I_p(p,p-1) } E(\iota ) + \sum_{\iota \in I_p(p,p) } E(\iota ),
\end{eqnarray*}
where
\[
I_p(i,j)=\{ \iota=( s,l-i-s,i+j,l-j-t,t )\mid p\leq s \leq l-i, p\leq t \leq l-j   \},
\]
while
\[
f_{m}^*f_m=\sum_{\iota \in I_m}E(\iota),
\]
where
\[
I_m=\{(m,0,2m-2,0,m)\}.
\]
We also write $I_0(0,0)=I_0$ and $I_m(m-1,m-1)=I_m$.

Let $J$ be the disjoint union 
$$
J_0 \sqcup \left( \bigsqcup_{p=1}^{m-1} \bigsqcup_{p-1\leq i,j \leq p} J_p(i,j)  \right) \sqcup J_m
$$ 
where each $J_p(i,j)$ is a copy of the corresponding $I_p(i,j)$ and similarly for $J_0=J_0(0,0)$ and $J_m=J_m(m-1,m-1)$.
Formally, given $0\leq p \leq m$ and $\max \{0,p-1 \}\leq i,j \leq \min \{ p, m-1 \}$, we set
\begin{eqnarray*}
J_p(i,j)=\{ (p,i,j, \iota   )\mid \iota \in I_p(i,j)  \}
\end{eqnarray*}
and we let $\alpha_p^{(i,j)}:I_p(i,j) \to J_p(i,j)$ be the bijection given by $\iota\mapsto(p,i,j,\iota)$.

Consider the function $O:I \to I$, where $O(\iota)$ is the index of the
canonically ordered word that is cyclically equivalent to $E(\iota)$. 
This function $O$ is explicitly given on $I_0$ and on each $I_p(i,j)$
($1\leq p \leq m-1$, $p-1 \leq i,j\leq p$) as follows.
For $\iota=(s,l-i-s,i+j,l-j-t,t)\in I_p(i,j)$ we have 
$$
O(\iota)=
\begin{cases}
U(i,j,s,t),&\begin{aligned}[t]\textrm{if }(i=j\textrm{ and }t>s)&\textrm{ or }(i>j\textrm{ and }t-1 > s) \\
                                                    &\textrm{ or }(j>i\textrm{ and }t>s-1)
             \end{aligned} \\ 
L(i,j,s,t),&\begin{aligned}[t]\textrm{if }(i=j\textrm{ and }t\leq s)&\textrm{ or }(i>j\textrm{ and }t \leq s-1) \\
                                                        &\textrm{ or }(j>i\textrm{ and }t \leq s-1),
             \end{aligned}
\end{cases}                                                
$$
where $U$ and $L$ are given by 
\begin{eqnarray*}
U(i,j,s,t)&=&(0,l,0,l) +\left(\begin{array}{cccc} 1 & 1  & 0& 0 \\ 0 & -1 &0 &-1  \\ 0 & 0 &1 &1 \\ -1 & 0 &-1 &0  \end{array} \right) \left( \begin{array}{c} i \\ j \\s \\t  \end{array}\right),  \\[2ex]
L(i,j,s,t)&=&(l,0,l,0)+\left(\begin{array}{cccc} -1 & 0 & -1 & 0 \\ 1 & 1 &0 &0 \\ 0 & -1 &0 &-1 \\ 0 & 0 & 1 &1  \end{array} \right) \left( \begin{array}{c} i \\ j \\s \\t \end{array}\right).
\end{eqnarray*}
The canonical form of an element of $J$ is naturally taken
to be the same as the canonical form of the element of I to which it corresponds and we denote the
``canonical form map'' also by $O:J \to I$.

We now work on proving~\eqref{eq:cw}. 
For $0\leq p \leq m-1$ define
$$
\iota_p=(p,l-2p,2p,l-2p,p)\in I_p(p,p).
$$
Then $O(\iota_p)=(l-2p,2p,l-2p,2p)$, which is of type II.
We will show that there are no other words of type II in $J$.
Since we have $m$ different values of $O$, Lemma \ref{cardoftypes} will imply~\eqref{eq:cw} in the case $w$ is of type~II.

Let $K=J \setminus \{ \alpha_p^{(p,p)}(\iota_p) \mid 0\leq p \leq m-1 \}$.
We will find a  partition of $K$ into two  sets,
$B$ and $C$, both with cardinality $2m(2m-1)(2m+1)/3$, and a bijection $\beta:B\to C$ such that
$O(\beta(\iota))=O(\iota)$ and check that $O$ restricted to $B$ is injective and its values are of type~I.
From this it will follow that~\eqref{eq:cw} holds in the case $w$ is of type~I, and this will complete the proof
of~\eqref{eq:cw} in the case $w$ is of type~II.

The partition and bijection are defined below in several parts.
In all cases, it is straightforward to check the identity $O(\beta(i))=O(i)$.
\begin{enumerate}[(i)]
\item  For  $0\leq p \leq m-1$ take
\begin{eqnarray*}
B_1(p)&=&I_{p+1}(p,p), \\
C_1(p)&=& \{ (s,l-p-s,2p,l-p-t,t ) \in I_p(p,p)\mid p+1\leq s,t\}.
\end{eqnarray*}
We notice $B_1(p)=C_1(p)$ for all $0\leq p \leq m-1$.
This identification is used to define the restriction of $\beta$ to $J_{p+1}(p,p)$
by $\beta \circ \alpha_{p+1}^{(p,p)}=\alpha_p^{(p,p)}$.
For $\iota=(s,l-p-s,2p,l-p-t,t )\in I_{p+1}(p,p)$ we have
$$
O(\iota)=\begin{cases}
(l-p-s,2p,l-p-t,s+t),&p+1\leq t \leq s \leq l-p \\
(2p,l-p-t,s+t,l-p-s),&p+1\leq s < t \leq l-p,
\end{cases}
$$
and this element is of type I.

Let $B_1=\bigcup_{p=0}^{m-1} \alpha_{p+1}^{(p,p)}(B_1(p))$ and $C_1=\bigcup_{p=0}^{m-1}\alpha_p^{(p,p)}(C_1(p))$.
We have
$$\# B_1=\sum_{p=0}^{m-1}(2(m-p)-1)^2 .$$
 
\item
For $1\leq p \leq m-1$, let 
\begin{eqnarray*}
B_2(p)&=&\{ (s,l-(p-1)-s,2p-1,l-p-t,t)\in I_p(p-1,p)\mid p+1\leq s  \}, \\
C_2(p)&=&\{ (\tilde{s},l-p-\tilde{s},2p-1,l-(p-1)-\tilde{t},\tilde{t})\in I_{p}(p,p-1)\mid p+1\leq \tilde{t} \}.
\end{eqnarray*}
For $\iota=(s,l-(p-1)-s,2p-1,l-p-t,t)\in B_2(p)$ let 
$$
 \beta(\alpha_p^{(p-1,p)}(\iota)) =\alpha_p^{(p,p-1)}(s-1,l-p-(s-1),2p-1,l-(p-1)-(t+1),t+1).
$$
Then $\beta:\alpha_p^{(p-1,p)}(B_2(p))\to \alpha_p^{(p,p-1)}(C_2(p))$ is a bijection and
a computation shows
\begin{multline*}
O(\beta(\alpha_p^{(p-1,p)}(\iota)))=O(\alpha_p^{(p-1,p)}(\iota)) \\
=\begin{cases}
 (l-p-s-1,2p-1,l-p-t,s+t),&p\leq t \leq s-1 \leq l-p, \\
 (2p-1,l-p-t,s+t,l-p-s-1),&p\leq s-1 < t \leq l-p
 \end{cases}
\end{multline*}
and this is a word of type~I.
Take
\[
B_2=\bigcup_{p=1}^{m-1}\alpha_p^{(p-1,p)}(B_2(p)),\qquad C_2=\bigcup_{p=1}^{m-1}\alpha_p^{(p,p-1)}(C_2(p)).
\]
By disjointness, we have
$$
\# B_2 =\sum_{p=1}^{m-1}(2(m-p))^2.
$$

\item
In $I_0(0,0)$, the cases $(s,t)=(0,l)$ and $(s,t)=(l,0)$ have  the same value under $O$, namely 
$(0,0,l,l)$,
which is type I.
Take 
\[
B_3=\{ \alpha_0^{(0,0)}(l,0,0,l,0)\},\qquad C_3=\{ \alpha_0^{(0,0)}(0,l,0,0,l)\}
\]
and let  $\beta( \alpha_0^{(0,0)}(l,0,0,l,0)) = \alpha_0^{(0,0)}(0,l,0,0,l)$.
 
\item
Consider the set
\[
B_4(0)=\{ (0,l,0,l-t,t) : 1 \leq t \leq l-1 \} \subset I_0(0,0).
\]
For $\iota=(0,l,0,l-t,t)\in B_4(0)$, take         
\[
\beta(\alpha_0^{(0,0)}(\iota))=   
\begin{cases}
\alpha_q^{(q,q)}(l-q,0,2q,l-2q,q),&l-t\textrm{ even, }q=\frac{l-t}{2}, \\
\alpha_q^{(q,q-1)}(l-q,0,2q-1,l-2q+1,q),&l-t\textrm{ odd, }q=\frac{l-t+1}{2}.
\end{cases}
\]
Let $B_4=\alpha_0^{(0,0)}(B_4(0))$ and let $C_4$ be the image of $B_4$ under $\beta$.
A direct computation shows 
\[
O(\beta(\alpha_0^{(0,0)}(\iota) ))=O(\alpha_0^{(0,0)}(\iota))=
(0,l-t,t,l),
\]
which is  type I.
We also have
$\# B_4=2(m-1)$.
 
\item
Consider the set
\[
B_5(0)=\{ (s,l-s,0,l,0)  :1 \leq s \leq l-1 \} \subset I_0(0,0).
\]
For $\iota=(s,l-s,0,l,0)\in B_5(0)$ define
\[
\beta(\alpha_0^{(0,0)}(\iota))=   
\begin{cases}
\alpha_{q}^{(q,q)}(q,l-q,2q,0,l-q),&l-s\textrm{ even, }q=\frac{l-s}{2}, \\
\alpha_q^{(q-1,q)}(q,l-2q+1,2q-1,0,l-q),&l-s\textrm{ odd, }q=\frac{l-s+1}{2}.
\end{cases}
\]
Let $C_5$ be the image of $B_5$ under $\beta$.
Then $\beta:B_5 \to C_5 $ is a bijection and
\[
O(\beta(\alpha_0^{(0,0)}(\iota)))=O(\alpha_0^{(0,0)}(\iota))
=(l-s,0,l,s)
\]
is of type I.
We also have $\# B_5=2(m-1)$.

\item
Let
\begin{align*}
B_6^1&=\bigcup_{p=1}^{m-1} \{ \alpha_p^{(p-1,p)}(p,l-2p+1,2p-1,l-p-t,t)  : p\leq t\leq l-p-1 \}, \\[2ex]
B_6^2&=\bigcup_{p=1}^{m-2} \{ \alpha_p^{(p,p)}(p,l-2p, 2p,l-p-t,t) : p+1\leq t\leq l-p-1 \}
\end{align*}
and let $B_6=B_6^1 \cup B_6^2$. 
For 
\begin{equation}\label{primera}
\eta= \alpha_p^{(p-1,p)}(p,l-2p+1,2p-1,l-p-t,t) \in B_6^1,
\end{equation}
let
\[
\beta(\eta) 
= 
\begin{cases}
\alpha_{q}^{(q,q)}(2m-2p-q,2p-1,2q ,l-2q,q),&p+t\textrm{ odd} \\
\alpha_q^{(q,q-1)}(2m-2p-q,2p-1,2q-1,l-2q+1,q ),&p+t\textrm{ even},
\end{cases}
\] 
where $q=m-\lfloor\frac{p+t+1}{2}\rfloor$.
For 
\begin{equation}\label{segunda}
\eta=\alpha_p^{(p,p)}(p,l-2p, 2p,l-p-t,t) \in B_6^2   
\end{equation}
let
\[
\beta(\eta)
=
\begin{cases}
\alpha_{q}^{(q,q)}(2m-2p-q-1,2p,2q,l-2q ,q),&p+t\textrm{ odd,} \\
\alpha_q^{(q,q-1)}(2m-2p-q-1,2p,2q-1,l-2q+1,q)&p+t\textrm{ even,}
\end{cases}
\]
where $q=m-\lfloor\frac{p+t+1}{2}\rfloor$.
Take $C_6$ to be the image of $B_6$ under $\beta$.
Then $\beta:B_6 \to C_6 $ is a bijection and
\[
O(\beta(\eta))=O(\eta)=
\begin{cases}
(2p-1,l-p-t+1,p+t,l-2p),&\eta\textrm{ as in \eqref{primera}} \\
(2p,lp-t,p+t,l-2p),&\eta\textrm{ as in \eqref{segunda}}
\end{cases}
\]
is of type I.
We also have
\[
\# B_6=\sum_{p=1}^{m-1}(2(m-p)-1)+ \sum_{p=1}^{m-2}(2(m-p)-2)=(2m-3)(m-1).
\]
\end{enumerate}

Lastly, we take
\[
B=\bigsqcup_{k=1}^6 B_k,\qquad C=\bigsqcup_{k=1}^6 C_k.
\]
A computation shows  
\begin{align*}
\# B &=\sum_{p=1}^{m-1}(2(m-p)-1)^2 +  \sum_{p=1}^{m-1}(2(m-p))^2+1 
+ 4(m-1)+(2m-3)(m-1)\\
&=1+(2m-1)^2+4(m-1)+(2m-3)(m-1) + \sum_{j=1}^{2(m-1)}j^2 \\
&=\frac{2m(2m-1)(2m+1)}{3}.
\end{align*}

We have, thus, constructed a  bijection $\beta:B \to C$ that satisfies $O(\beta(\eta))=O(\eta)$ and,
as can be checked,
the restriction of $O$ to $B$ is injective and takes values that are all of type I.
Lastly the sets $B$ and $C$ form a partition of $K$.
This completes the proof of Proposition~\ref{prop:S4m+2,4}.

The bijection we have defined may be better understood using some pictures, which are contained in Figures~\ref{fig:first6}
and~\ref{fig:last4}.
We parameterize $I_0$ by the  square  $ \{ (s,t) \in \mathbb{Z}^2: 0\leq s,t\leq l \}$ and $I_m$ by the single point $(m,m)$.
Likewise for fixed $1\leq p \leq m-1$ and $i,j\in\{p-1,p\}$, the set $I_p(i,j)$ is parameterized by
$\{(s,t)\in \mathbb{Z}^2 : p\leq s \leq l-i, p\leq t \leq l-i \}$.
We show the case $m=3$.
\begin{figure}[hb]
\caption{Some sets in $K$ with $m=3$}
\label{fig:first6}
 \includegraphics{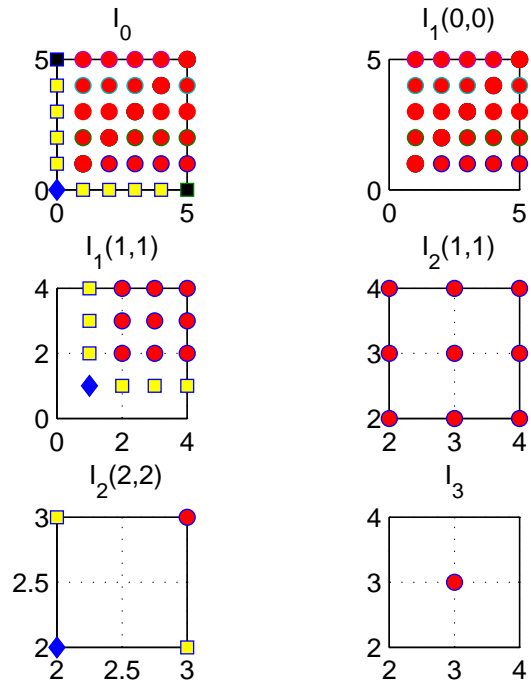}
\end{figure}

\begin{figure}
\caption{More sets in $K$ with $m=3$}
\label{fig:last4}
 \includegraphics{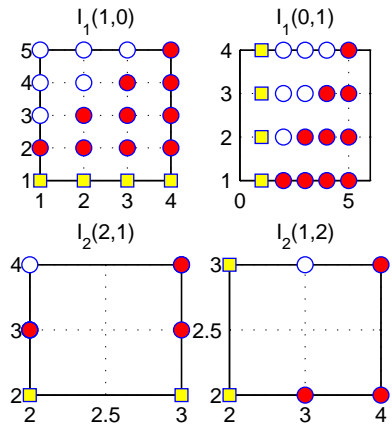}
\end{figure}

In these figures,
\begin{enumerate}[$\bullet$]
\item The points that give words of type II are marked with diamonds.
\item The light circles in the right column are matched with the circles in the left.
Likewise the solid circles.
These correspond to cases 1 and 2. 

In the case 2 the bijection is implemented by $(s,t)\mapsto (s-1,t+1) $, form the rightmost sub-square of side $l-2p+1$ in $I_p(p-1,p)$ to the uppermost sub-square of side $l-2p+1$ in $I_P(p,p-1)$, for $1\leq p\leq m-1$.

\item Case 3 is marked with a solid square.
\item  The remaining  points (which correspond to the most complicated part of the bijection), plotted in  light  squares, correspond the the cases 4,5 and 6.
\end{enumerate}


\end{proof}




The following theorem summarizes the results obtained so far in this section.
\begin{thm}
If $k=2$ and $m\ge2$ is even, or if $k=4$ and $m\ge6$ is even but not a multiple of $4$, then
$S_{m,k}(A,B)$ is cyclically equivalent to a sum of Hermitian squares in $\Reals\langle A,B\rangle$.
Therefore, for these values of $m$ and $k$, $\Tr(S_{m,k}(A,B))\ge0$ whenever $A$ and $B$ are Hermitian matrices.
\end{thm}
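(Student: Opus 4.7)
The proof is essentially a bookkeeping exercise assembling the preceding lemmas and proposition, together with a standard observation about traces.

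First I would handle the sum-of-squares representation by cases. For $k=2$ with $m\ge 2$ even, I would split according to whether $m$ is divisible by $4$: if $m=4n$ with $n\ge 1$, Lemma~\ref{lem:S4m,2} gives an explicit expression of $S_{m,2}(A,B)$ as cyclically equivalent to a nonnegative real linear combination of Hermitian squares $f_j^*f_j$ with $f_j\in\Reals\langle A,B\rangle$; if $m=4n+2$ with $n\ge 0$, Lemma~\ref{lem:S4m+2,2} does the same. For $k=4$ with $m\ge 6$ even and not a multiple of $4$, we have $m=4n+2$ with $n\ge 1$, and Proposition~\ref{prop:S4m+2,4} provides the required representation. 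Since a nonnegative real scalar multiple of a Hermitian square $f^*f$ is itself a Hermitian square (absorb $\sqrt{c}$ into $f$), the stated expressions are all of the form $\sum_i g_i^*g_i$ with $g_i\in\Reals\langle A,B\rangle$, up to a sum of commutators.

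For the trace conclusion, I would use the following routine observation: if $P\in\Reals\langle A,B\rangle$ is cyclically equivalent to a sum of Hermitian squares, say $P=\sum_i g_i^*g_i+\sum_j[h_j,k_j]$ in $\Reals\langle A,B\rangle$, then for any Hermitian matrices $A$ and $B$ the evaluation homomorphism sends the $*$-operation of $\Reals\langle A,B\rangle$ to the matrix adjoint (since $A^*=A$ and $B^*=B$), so each $g_i^*g_i$ becomes a positive semidefinite matrix, and each commutator has trace zero. Therefore $\Tr(P(A,B))=\sum_i\Tr(g_i(A,B)^*g_i(A,B))\ge 0$, giving the stated inequality for $P=S_{m,k}(A,B)$.

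There is no substantive obstacle here: the cases of $m$ have all been handled by the three preceding results (Lemmas~\ref{lem:S4m,2}, \ref{lem:S4m+2,2} and Proposition~\ref{prop:S4m+2,4}), and the passage from a sum-of-squares representation in $\Reals\langle A,B\rangle$ (with no formal square roots $X,Y$) to trace nonnegativity for Hermitian, rather than merely positive semidefinite, matrices is just the observation that Hermitian substitution respects the involution. The only thing to be careful about is to note that the small boundary cases ($m=2$ for $k=2$, and $m=6$ for $k=4$) are covered: $S_{2,2}(A,B)=B^2$ is trivially a Hermitian square, and $m=6$ corresponds to $n=1$ in Proposition~\ref{prop:S4m+2,4}.
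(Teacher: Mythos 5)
Your proposal is correct and matches the paper's intent exactly: the theorem is stated as a summary of Lemmas~\ref{lem:S4m,2} and~\ref{lem:S4m+2,2} and Proposition~\ref{prop:S4m+2,4}, combined with the standard observation that cyclic equivalence to a sum of Hermitian squares in $\Reals\langle A,B\rangle$ forces trace nonnegativity under Hermitian substitution. Your extra care with the boundary cases ($m=2$ via $S_{2,2}(A,B)=B^2$, and $m=6$ as the first instance of Proposition~\ref{prop:S4m+2,4}) is a welcome, if minor, addition.
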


Below is a non-sum-of-squares result for $S_{8,4}(A,B)$.
However, Question~\ref{ques:H} for $m=8$ and $k=4$ is still open.

\begin{prop}\label{prop:S84}
The polynomial $S_{8,4}(A,B)$ is not cyclically equivalent to a sum of Hermitian squares in $\Reals\langle A,B\rangle$.  
\end{prop}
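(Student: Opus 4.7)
The plan is to adapt the Gram-matrix argument of Proposition~\ref{prop:S12,6} to the setting of $\Reals\langle A,B\rangle$. Assume for contradiction that $S_{8,4}(A,B)\cycsim\sum_i f_i^*f_i$ with $f_i\in\Reals\langle A,B\rangle$. A routine bidegree reduction (using Lemma~\ref{lem:kerpos} together with the fact that a sum of Hermitian squares cyclically equivalent to zero in $\Reals\langle A,B\rangle$ must vanish identically, as seen by taking trace on Hermitian matrix inputs and using that commutators have zero trace) lets us assume all $f_i$'s are homogeneous of bidegree $(2,2)$. Hence there is a real positive semidefinite $6\times6$ matrix $H$ indexed by $W_{2,2}(A,B)=\{u_1,v_1,u_2,v_2,u_3,u_4\}$, where $u_1=A^2B^2$, $u_2=ABAB$, $u_3=AB^2A$, $u_4=BA^2B$ and $v_i=u_i^*$ (with $u_3,u_4$ self-adjoint), such that $Z^*HZ\cycsim S_{8,4}(A,B)$; and by Remark~\ref{rem:Huvstar} we may further assume $H(u,v)=H(u^*,v^*)$.

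I next pin down several entries of $H$ by building tables in the style of Propositions~\ref{prop:Skl} and~\ref{prop:S12,6}. For each judiciously chosen $w\in W_{4,4}(A,B)$ I list the cyclic representatives of $[w]$ and identify those that factor as $u^*v$ with $u,v\in W_{2,2}(A,B)$; the identity $|[w]|=\sum_{u^*v\cycsim w}H(u,v)$ then yields a linear relation. Taking $w=A^4B^4$ gives $H(u_1,u_1)=4$; $w=A^3BAB^3$ gives $H(u_1,u_2)=4$; $w=A^3B^2AB^2$ gives $H(u_1,u_3)=2$; $w=A^2BA^2B^3$ gives $H(u_1,u_4)=2$; the period-$4$ word $w=A^2B^2A^2B^2$ gives $2H(u_1,v_1)+H(u_3,u_3)+H(u_4,u_4)=4$; and $w=ABBABAAB$ gives $H(u_2,u_2)+H(u_3,u_4)=4$.

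The contradiction is extracted by pairing positive semidefiniteness with these relations. The $\{u_1,u_2\}$ principal submatrix $\bigl(\begin{smallmatrix}4&4\\4&H(u_2,u_2)\end{smallmatrix}\bigr)$ must be positive semidefinite, forcing $H(u_2,u_2)\ge4$, so the sixth relation gives $H(u_3,u_4)\le0$. On the other hand, evaluating the quadratic form $\alpha^TH\alpha$ at $\alpha=(1,1,0,0,-1,-1)^T$ (with coordinates ordered as $u_1,v_1,u_2,v_2,u_3,u_4$), and using $H(v_1,v_1)=H(u_1,u_1)=4$ together with $H(v_1,u_i)=H(u_1,u_i)=2$ for $i=3,4$ (instances of $H(u,v)=H(u^*,v^*)$, applied with $u_3,u_4$ self-adjoint), the $H(u_1,v_1),H(u_3,u_3),H(u_4,u_4)$ contributions collapse by the period-$4$ relation into the constant $4$, leaving $\alpha^TH\alpha=-4+2H(u_3,u_4)$. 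Positivity then forces $H(u_3,u_4)\ge2$, contradicting $H(u_3,u_4)\le0$. The main technical work is in the bidegree reduction and the enumeration of cyclic representatives and $u^*v$-factorizations; the subtlest step is the choice of $\alpha$, engineered so that the period-$4$ relation causes the otherwise undetermined entries of $H$ to cancel out of the final inequality.
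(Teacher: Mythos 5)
Your argument is correct, and while it lives in the same framework as the paper's proof, it extracts the contradiction differently. Both proofs start from the Gram-matrix criterion (a real positive semidefinite $6\times6$ matrix $H$ indexed by $W_{2,2}(A,B)$ with $Z^*HZ\cycsim S_{8,4}(A,B)$) and from linear equations obtained by matching, cyclic class by cyclic class, sums of entries of $H$ against the class sizes in $W_{4,4}(A,B)$. The paper imposes both symmetries of Remark~\ref{rem:Huvstar} (the $*$-symmetry~\eqref{eq:Huv} and the $A\leftrightarrow B$ symmetry~\eqref{eq:Hsig}), uses all ten class equations to reduce $H$ to two unknowns $x_2=H_{22}$, $x_3=H_{33}$, and derives the contradiction from several $2\times2$ compressions plus the factored determinant of the compression to $\{1,3,4,6\}$, ending in the impossible inequality $x_3^2-6x_3+16\le0$. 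You use only six class equations and only the $*$-symmetry, and finish with the $\{u_1,u_2\}$ principal minor (forcing $H(u_2,u_2)\ge4$, hence $H(u_3,u_4)\le0$ by your relation from $ABBABAAB\cycsim A^2BAB^2AB$, a class of order $8$) together with the single test vector $\alpha=(1,1,0,0,-1,-1)^t$, whose quadratic form collapses via the relation from $A^2B^2A^2B^2$ to $-4+2H(u_3,u_4)\ge0$. I checked your six relations against the full factorization table and the arithmetic; they are all correct. Your route is leaner — no $\sigma$-symmetrization and no quartic determinant — at the cost of having to find the right test vector; the paper's route is more mechanical once $H$ is parametrized.

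One caveat: your opening ``bidegree reduction'' is thinner than the rest. Lemma~\ref{lem:kerpos} does not by itself dispose of the cross terms $f_{i,(a,b)}^*f_{i,(4-a,4-b)}$ with $(a,b)\ne(2,2)$, which also contribute to the bidegree-$(4,4)$ component. To make the reduction honest you need, e.g., an extreme-point argument on the set of bidegrees occurring in the $f_i$: if $d_0$ is a vertex of its convex hull, the bidegree-$2d_0$ component of $\sum_i f_i^*f_i$ is exactly $\sum_i f_{i,d_0}^*f_{i,d_0}$, so if $2d_0\ne(4,4)$ it is cyclically equivalent to zero and hence zero by your trace-evaluation observation, forcing every vertex to equal $(2,2)$. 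Alternatively, simply invoke Proposition~3.3 of~\cite{KS08}, which is exactly what the paper does to obtain the $6\times6$ Gram matrix.
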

\begin{proof}
We order the elements of $W_{2,2}(A,B)$ in the column vector
\[
Z=(A^2B^2,ABAB,AB^2A,BA^2B,BABA,B^2A^2)^t.
\]
If $S_{8,4}(A,B)$ were equivalent to a sums of squares in $\Reals\langle A,B\rangle$,
then by Proposition~3.3 of~\cite{KS08}, we would have $S_{8,4}(A,B)\cycsim Z^*HZ$
for $H$ a $6\times6$ real, positive semidefinite matrix.
So suppose, to obtain a contradiction, that such exists.
There are ten cyclic equivalence class of words in $W_{4,4}(A,B)$.
We've chosen one representative for each and we have listed them in Table~\ref{tab:ws} with their orders,
where we say the {\em order} of a word is the number of cyclically equivalent forms that it has.
\begin{table}[hb]
\caption{Representatives of cyclic equivalence classes in $W_{4,4}(A,B)$.}
\label{tab:ws}
\begin{tabular}{c|l|c}
name & word & order \\
\hline\hline
$w_1$ & $A^4B^4$  & 8 \\ \hline
$w_2$ & $A^3BAB^3$ & 8 \\ \hline
$w_3$ & $A^3B^2AB^2$ & 8 \\ \hline
$w_4$ & $A^3B^3AB$ & 8 \\ \hline
$w_5$ & $A^2BA^2B^3$ & 8 \\ \hline
$w_6$ & $A^2BABAB^2$ & 8 \\ \hline
$w_7$ & $A^2BAB^2AB$ & 8 \\ \hline
$w_8$ & $A^2B^2A^2B^2$ & 4 \\\hline
$w_9$ & $A^2B^2ABAB$ & 8 \\ \hline
$w_{10}$ & $ABABABAB$ & 2
\end{tabular}
\end{table}
If we denote the $i$th element of the vector $Z$ by $z_i$, then the matrix whose $(i,j)$th entry
is the symbol $k\in\{1,\ldots,10\}$ such that $w_k$ is cyclically equivalent to $z_i^*z_j$
is the matrix found below.
\[
\begin{pmatrix}
 1 & 2 & 3 & 5 & 6 & 8 \\
 4 & 7 & 9 & 9 & 10 & 6 \\
 3 & 6 & 8 & 7 & 9 & 3 \\
 5 & 6 & 7 & 8 & 9 & 5 \\
 9 & 10 & 6 & 6 & 7 & 2 \\
 8 & 9 & 3 & 5 & 4 & 1
\end{pmatrix}.
\]
The hypothesis $Z^*HZ\sim S_{8,4}(A,B)$ is, therefore, equivalent to the ten linear equations
\begin{align}
8&=H_{11}+H_{66} \label{eq:Hij1} \\
8&=H_{12}+H_{56} \displaybreak[1] \\
8&=H_{13}+H_{31}+H_{36}+H_{63} \displaybreak[1] \\
8&=H_{21}+H_{65} \displaybreak[2] \\
8&=H_{14}+H_{41}+H_{46}+H_{64} \displaybreak[2] \\
8&=H_{15}+H_{26}+H_{32}+H_{42}+H_{53}+H_{54} \displaybreak[2] \\
8&=H_{22}+H_{34}+H_{43}+H_{55} \displaybreak[1] \\
4&=H_{16}+H_{33}+H_{44}+H_{61} \displaybreak[1] \\
8&=H_{23}+H_{24}+H_{35}+H_{45}+H_{51}+H_{62} \\
2&=H_{25}+H_{52} \label{eq:Hij10}
\end{align}
in the entries of the  matrix $H$.
However, $H$ is real symmetric.
Moreover, we may assume without loss of generality 
that the relations~\eqref{eq:Huv}
and~\eqref{eq:Hsig}
from Remark~\ref{rem:Huvstar} hold, and we find, therefore, that $H$ commutes with
the permutation matrices corresponding to the order--two permutations
\begin{align*}
\tau&\;:\;1\leftrightarrow6,\;2\leftrightarrow5. \\
\sigma&\;:\;1\leftrightarrow6,\;2\leftrightarrow5,\;3\leftrightarrow4.
\end{align*}
Thus, we have
\[
H=
\begin{pmatrix}
H_{11} & H_{12} & H_{13} & H_{13} & H_{15} & H_{16} \\
H_{12} & H_{22} & H_{23} & H_{23} & H_{25} & H_{15} \\
H_{13} & H_{23} & H_{33} & H_{34} & H_{23} & H_{13} \\
H_{13} & H_{23} & H_{34} & H_{33} & H_{23} & H_{13} \\
H_{15} & H_{25} & H_{23} & H_{23} & H_{22} & H_{12} \\
H_{16} & H_{15} & H_{13} & H_{13} & H_{12} & H_{11}
\end{pmatrix}.
\]
The equations~\eqref{eq:Hij1}--\eqref{eq:Hij10} now yield several relations, for example, from~\eqref{eq:Hij1} we get
$H_{11}=4$.
Using these relations to eliminate some variables, we have that $H$ equals the matrix
\[
\begin{pmatrix}
4 & 4 & 2 & 2 & 4-2H_{23} & 2-H_{33} \\
4 & H_{22} & H_{23} & H_{23} & 1 & 4-2H_{23} \\
2 & H_{23} & H_{33} & 4-H_{22} & H_{23} & 2 \\
2 & H_{23} & 4-H_{22} & H_{33} & H_{23} & 2 \\
4-2H_{23} & 1 & H_{23} & H_{23} & H_{22} & 4 \\
2-H_{33} & 4-2H_{23} & 2 & 2 & 4 & 4
\end{pmatrix}.
\]
We will show that there is no positive semidefinite real matrix of this form.
To make the formulas slightly more readable, we will use the symbols
$x_2=H_{22}$ and $x_3=H_{33}$.
Of course, we must have $x_2\ge0$ and $x_3\ge0$.
We will consider compressions of $H$ obtained by restricting to rows and columns in subsets of $\{1,\ldots,6\}$.
The compression to $\{1,2\}$ is $\left(\begin{smallmatrix}4&4\\4&x_2\end{smallmatrix}\right)$,
and from positivity we obtain $x_2\ge4$.
Compression to $\{1,6\}$ yields $|2-x_3|\le4$, so $x_3\le6$.
Compression to $\{1,3\}$ yields $x_3\ge1$.
The determinant of the compression of the matrix $H$ to $\{1,3,4,6\}$ is the polynomial with factorization
\[
(2+x_3)(x_2+x_3-4)(8-6x_2+2x_3+x_2x_3-x_3^2).
\]
Since $x_3\ge1$ and $x_2\ge4$, the first two factors are strictly positive.
So the third factor must be nonnegative, and we conclude
\[
x_2(x_3-6)\ge(x_3-4)(x_3+2).
\]
Since $x_3\le6$ we must have $x_3\le4$ and
\[
x_2\le\frac{(4-x_3)(x_3+2)}{6-x_3}\,.
\]
But combining this with $x_2\ge4$, we get
$24-4x_3\le8+2x_3-x_3^2$, so $x_3^2-6x_3+16\le0$, which is impossible.
This is the desired contradiction.
\end{proof}

\section{Proof of Theorem~\ref{thm:negdef}}
\label{sec:LagrangeMult}

In this section, we prove Theorem~\ref{thm:negdef} using a straightforward application of the method
of Lagrange multipliers.

\begin{lemma}\label{lem:LagrangeMult}
Let $n,m,k\in\Nats$ and fix an $n\times n$ Hermitian matrix $B$.
Consider the function $A\mapsto\Tr(S_{m,k}(A,B))$
with domain consisting of the $n\times n$ Hermitian matrices $A$ such that $\Tr(A^2)=1$.
Suppose $A_0$ is a point where this function has a relative extrumum.
Then
\begin{equation}\label{eq:SmA}
S_{m-1,k}(A_0,B)=\frac{m-k}m\Tr(S_{m,k}(A_0,B))A_0.
\end{equation}
\end{lemma}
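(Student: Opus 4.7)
The plan is to apply the method of Lagrange multipliers on the real inner-product space of $n\times n$ Hermitian matrices, equipped with $\langle X, Y\rangle=\Tr(XY)$. The constraint set $\{A\text{ Hermitian}:\Tr(A^2)=1\}$ is a sphere, so at a relative extremum $A_0$ the gradient of $f(A):=\Tr(S_{m,k}(A,B))$ must be a real scalar multiple of the gradient of $g(A):=\Tr(A^2)$.

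First I would compute the differential of $f$ at $A$ in a Hermitian direction $H$. Each word $w\in W_{m-k,k}(A,B)$ contributes one summand per occurrence of the letter $A$, namely the trace of $w$ with that $A$ replaced by $H$; by cyclicity of trace, this summand equals $\Tr(H\,u)$ for some $u\in W_{m-k-1,k}(A,B)$. Parametrizing the cyclic shift that moves the inserted $H$ to the end of the word produces a bijection showing that each such $u$ is obtained exactly $m$ times as the pair (word, marked $A$-position) varies. Hence
\[
\frac{d}{dt}\Big|_{t=0}\Tr(S_{m,k}(A+tH,B)) \;=\; m\,\Tr(H\,S_{m-1,k}(A,B)),
\]
while differentiating the constraint gives $2\Tr(HA)$. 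Because $S_{m-1,k}(A_0,B)$ is Hermitian---the set $W_{m-k-1,k}$ is closed under word reversal and $A_0,B$ are Hermitian---the Lagrange condition together with the nondegeneracy of the trace pairing on Hermitian matrices yields $S_{m-1,k}(A_0,B)=(2\lambda/m)A_0$ for some $\lambda\in\Reals$.

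Finally I would identify $\lambda$ by multiplying both sides by $A_0$ and taking the trace. The right-hand side becomes $2\lambda/m$, using $\Tr(A_0^2)=1$, and the left-hand side equals
\[
\sum_{u\in W_{m-k-1,k}}\Tr(A_0\,u) \;=\; \sum_{\substack{w\in W_{m-k,k}\\ w\text{ begins with }A}}\Tr(w),
\]
since $u\mapsto A_0 u$ is a bijection onto words in $W_{m-k,k}(A_0,B)$ that start with $A$. Grouping the latter sum by cyclic equivalence class, inside each class the trace is constant while the fraction of representatives beginning with $A$ equals the fraction of $A$'s in the word, namely $(m-k)/m$. Summing over classes gives $\Tr(A_0\,S_{m-1,k}(A_0,B)) = \frac{m-k}{m}\Tr(S_{m,k}(A_0,B))$, and solving for $\lambda$ produces~\eqref{eq:SmA}. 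The only point requiring genuine attention is this last counting step for cyclic equivalence classes whose size is strictly less than $m$; but since in that case the periodic block still contains $A$'s at density $(m-k)/m$, the proportion of class representatives that start with $A$ is again $(m-k)/m$, and the identity persists.
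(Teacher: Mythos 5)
Your proposal is correct and follows essentially the same route as the paper: Lagrange multipliers on the Hermitian sphere, the gradient computations $2\Tr(HA)$ and $m\Tr(HS_{m-1,k}(A,B))$, and elimination of the multiplier by multiplying by $A_0$ and taking the trace. The only difference is that where the paper invokes Lemma~2.1 of Hillar~\cite{H07} for the identity $\Tr(A_0S_{m-1,k}(A_0,B))=\frac{m-k}{m}\Tr(S_{m,k}(A_0,B))$, you prove it directly by the cyclic-class counting argument (correctly handling the periodic words whose class size divides $m$), which is a sound self-contained substitute.
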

\begin{proof}
This is an application of the method of Lagrange multipliers to the problem 
of optimizing $\Tr(S_{m,k}(A,B))$ subject to the constraint $\Tr(A^2)=1$.
(Compare to Appendix~A of~\cite{KS08}.)
The space of Hermitian $n\times n$ matrices is a real vector space of dimension $n^2$.
If $H$ and $A$ are Hermitian matrices, then
\begin{equation}\label{eq:Hconstr}
\frac d{d\lambda}\bigg|_{\lambda=0}\Tr((A+\lambda H)^2)=2\Tr(HA).
\end{equation}
Letting $H$ run through a fixed basis for the space of $n\times n$ Hermitian matrices,
the list of values~\eqref{eq:Hconstr} forms the gradient of the constraint function with respect to the
$n^2$ variables.

Letting $W_{m-k,k}(A,B)$ be the set of all words in noncommuting variables $A$ and $B$ with $m-k$ $A$'s and $k$ $B$'s,
we have $|W_{m-k,k}(A,B)|=\binom{m}{k}$.
If $w=w(A,B)\in W_{m-k,k}(A,B)$, then
$\frac d{d\lambda}\big|_{\lambda=0}w(A+\lambda H,B)$ equals the sum of the $m-k$ words obtained by replacing in turn and individually
the letters of $w$ that are equal to $A$ by $H$.
Thus, $\frac d{d\lambda}\big|_{\lambda=0}S_{m,k}(A+\lambda H,B)$ is the sum of all $(m-k)\binom{m}{k}$ words in $A$, $B$ and $H$, 
where $A$ appears $m-k-1$ times, $B$ appears $k$ times and $H$ appears once.
Taking the trace,
we get
\begin{equation}\label{eq:Hobj}
\frac d{d\lambda}\bigg|_{\lambda=0}\Tr(S_{m,k}(A+\lambda H,B))=m\Tr(HS_{m-1,k}(A,B)).
\end{equation}
Letting $H$ run through the same basis as taken above, the list of values~\eqref{eq:Hobj}
forms the gradient of the objective function with respect to the $n^2$ variables.

By the method of Lagrange multipliers, we conclude that at a relative extremum $A_0$, these two gradients
must be parallel.
In other words, we must have
\[
2\mu\Tr(HA_0)=m\Tr(HS_{m-1,k}(A_0,B))
\]
for some $\mu\in\Reals$ and all $H$, and this implies
\[
2\mu A_0=mS_{m-1,k}(A_0,B)
\]
Multiplying both sides by $A_0$, taking the trace and using Lemma~2.1 of~\cite{H07}, we get
\[
2\mu=2\mu\Tr(A_0^2)=m\Tr(A_0S_{m-1,k}(A_0,B))=(m-k)\Tr(S_{m,k}(A_0,B)),
\]
and~\eqref{eq:SmA} follows.
\end{proof}

\begin{proof}[Proof of Theorem~\ref{thm:negdef}.]
The implication~(\ref{it:negdef-i})$\implies$(\ref{it:negdef-ii}) is clear.

Suppose~(\ref{it:negdef-i}) does not hold.
Let $A_0$ and $B_0$ be a Hermitian $n\times n$ matrices
where $\Tr(S_{m,k}(A,B))$ takes its absolute minimum subject to $\Tr(A^2)=\Tr(B^2)=1$.
By assumption, we have $\Tr(S_{m,k}(A_0,B_0))<0$.
By Lemma~\ref{lem:LagrangeMult} and the analogue obtained by switching $A$ and $B$, we have
\begin{align*}
S_{m-1,k}(A_0,B_0)&=\frac{m-k}m\Tr(S_{m,k}(A_0,B_0))A_0 \\
S_{m,k-1}(A_0,B_0)&=\frac km\Tr(S_{m,k}(A_0,B_0))B_0.
\end{align*}
Therefore, the Hermitian matrix
\begin{multline*}
S_{m,k}(A_0,B_0)=A_0S_{m-1,k}(A_0,B_0)+B_0S_{m-1,k-1}(A_0,B_0) \\
=\Tr(S_{m,k}(A_0,B_0))\bigg(\frac{m-k}mA_0^2+\frac kmB_0^2\bigg)
\end{multline*}
has only nonpositive eigenvalues.
Thus, (\ref{it:negdef-ii}) does not hold.
\end{proof}

\bibliographystyle{plain}

\end{document}